\documentclass[reqno,a4paper,12pt]{amsart}
\usepackage{color}
\usepackage{amsmath,amssymb,amsfonts,amsthm,enumerate}
\usepackage{mathrsfs,accents}
\usepackage[T1]{fontenc}
\usepackage[active]{srcltx}
\usepackage{epsfig}
\usepackage{graphicx}

 \setlength{\voffset}{1 cm} \setlength{\oddsidemargin}{0.1cm}
 \setlength{\evensidemargin}{0.1cm}
 \setlength{\textwidth}{16.0cm} \setlength{\textheight}{20.5cm}

\numberwithin{equation}{section}
\newtheorem{theorem}{Theorem}[section]
\newtheorem{lemma}[theorem]{Lemma}

\newtheorem{remark}[theorem]{Remark}

\newtheorem{corollary}[theorem]{Corollary}

\newcommand{\R}{\mathbb{R}}

\newcommand{\Z}{\mathbb{Z}}

\newcommand{\Haus}[1]{{\mathscr H}^{#1}} 
\newcommand{\Leb}[1]{{\mathscr L}^{#1}} 
\newcommand{\Per}{{\sf P}} 
\newcommand{\I}{{\sf I}}
\newcommand{\J}{{\sf J}}
\newcommand\uno{{\bf 1}} 

\newcommand{\mean}[1]{\,-\hskip-1.08em\int_{#1}} 
\newcommand{\textmean}[1]{- \hskip-.9em \int_{#1}} 

\newcommand{\res}{\mathop{\hbox{\vrule height 7pt width .5pt depth 0pt
\vrule height .5pt width 6pt depth 0pt}}\nolimits}

\begin{document}

\title[$BMO$-type norms related to the perimeter of sets]{
$BMO$-type norms related to the perimeter of sets}

\author[L.\ Ambrosio]{Luigi Ambrosio}
\address{Scuola Normale Superiore, Piazza Cavalieri 7, 56100 Pisa, Italy}
\email{l.ambrosio@sns.it}

\author[J.\ Bourgain]{Jean Bourgain}
\address{School of Mathematics, Institute for Advanced Study Princeton, NJ 08540, USA}
\email{bourgain@ias.edu}

\author[H.\ Brezis]{Haim Brezis}
\address{Rutgers University, Department of Mathematics, Hill center, Busch Campus, 110 Frelinghuysen Road, Piscataway, NJ 08854, USA}
\address{Technion,
Department of Mathematics, Haifa 32000, Israel. }
\email{brezis@math.rutgers.edu}

\author[A.\ Figalli]{Alessio Figalli}
\address{University of Texas at Austin, Mathematics Dept., 2515 Speedway Stop C1200,
Austin, Texas 78712-1202, USA}
\email{figalli@math.utexas.edu}

 \begin{abstract}
 In this paper we consider an isotropic variant of the $BMO$-type norm recently introduced
 in \cite{bbm}. We prove that, when considering characteristic functions of sets,
 this norm is related to the perimeter. A byproduct of our analysis is a new 
 characterization of the perimeter of sets in terms of this norm, independent of the theory of
distributions.
 \end{abstract}

\maketitle

\section{Introduction}

Let $Q=(0,1)^n$ be the unit cube in $\R^n$, $n>1$.
In a very recent paper \cite{bbm}, the second and third author, in collaboration with P. Mironescu, introduced a new function
space $B\subset L^1(Q)$ based on the following 
seminorm, inspired by the celebrated $BMO$ space of John-Nirenberg \cite{JN}:
$$
\|f\|_B:=\sup_{\epsilon\in (0,1)}[f]_\epsilon,
$$
where 
\begin{equation}\label{def[]eps}
[f]_\epsilon:=\epsilon^{n-1}\sup_{{\mathcal G}_\epsilon}\sum_{Q'\in\mathcal G_\epsilon}
\mean{Q'}\Bigl|f(x)-\mean{Q'} f\Bigr|\, dx,
\end{equation}
and ${\mathcal G}_\epsilon$ denotes a collection of disjoint $\epsilon$-cubes $Q'\subset Q$ with sides parallel
to the coordinate axes and cardinality not exceeding $\epsilon^{1-n}$;
the supremum in \eqref{def[]eps} is taken over all such collections.
In addition to $\|f\|_B$, it is also useful to consider its infinitesimal version, namely
$$
[f]:=\limsup_{\epsilon\downarrow 0}[f]_\epsilon\leq \|f\|_B,
$$
and the space $B_0:=\{f \in B:\,[f]=0\}$.

Their main motivation was the search of a space $X$ on the one hand sufficiently large to include $VMO$, $W^{1,1}$, and the 
fractional Sobolev spaces $W^{1/p,p}$ with $1<p<\infty$, on the other hand sufficiently small (i.e., with a sufficiently strong
seminorm) to provide the implication 
\begin{equation}
\label{eq:X1}
f\in X \text{ and $\Z$-valued}\qquad\Longrightarrow\qquad \text{$f=k\,\,\,\Leb{n}$-a.e. in $Q$, for some $k\in\Z$}
\end{equation}
an implication known to be true in the spaces $VMO$, $W^{1,1}$, and $W^{1/p,p}$. 

One of the main results in \cite{bbm} asserts that \eqref{eq:X1} holds with $X=B_0$.
The principal ingredient in the proof concerns the case $f=\uno_A$,
where $A\subset Q$ is measurable.
For such special functions it is proved in \cite{bbm} that 
\begin{equation}\label{eq:bbm2}
\biggl\| f-\mean{Q} f\biggr\|_{L^{n/(n-1)}(Q)}\leq C\,[f];
\end{equation}
here and in what follows we denote by $C$ a generic constant depending only on $n$.
Estimate \eqref{eq:bbm2} suggests a connection with Sobolev embeddings and isoperimetric inequalities; recall e.g. that
\begin{equation}
\label{eq:X2}
\biggl\| f-\mean{Q} f\biggr\|_{L^{n/(n-1)}(Q)}\leq C\,|Df|(Q)\qquad \forall\,f \in BV(Q).
\end{equation}
When $f=\uno_A$, \eqref{eq:X2} takes the form
\begin{equation}
\label{eq:X3}
\biggl\| \uno_A-\mean{Q} \uno_A\biggr\|_{L^{n/(n-1)}(Q)}\leq C\,\Per(A,Q),
\end{equation}
where $\Per(A,Q)$ denotes the perimeter of $A$ relative to $Q$.
Combining \eqref{eq:X3} with the obvious inequality
$$
\biggl\| \uno_A-\mean{Q} \uno_A\biggr\|_{L^{n/(n-1)}(Q)} \leq 2
$$
yields 
\begin{equation}
\label{eq:X4}
\biggl\| \uno_A-\mean{Q} \uno_A\biggr\|_{L^{n/(n-1)}(Q)}\leq C\,\min\{1,\Per(A,Q)\}.
\end{equation}
In view of \eqref{eq:bbm2} and \eqref{eq:X4}
it is natural to ask whether there exists a relationship between $[\uno_A]$ and $\min\{1,\Per(A,Q)\}$.
The aim of this paper is to answer positively to this question.

Since the concept of perimeter is isotropic, it is better to make also the main object of \cite{bbm} isotropic, by considering  
\begin{equation}\label{defIeps}
\I_\epsilon(f):=\epsilon^{n-1}\sup_{{\mathcal F}_\epsilon}\sum_{Q'\in\mathcal F_\epsilon}
\mean{Q'}\Bigl|f(x)-\mean{Q'} f\Bigr|\, dx,
\end{equation}
where ${\mathcal F}_\epsilon$ denotes a collection of disjoint $\epsilon$-cubes $Q'\subset\R^n$
with \emph{arbitrary} orientation and cardinality not exceeding $\epsilon^{1-n}$. 

The main result of this paper is the following:

\begin{theorem} \label{thm:mainintro} For any measurable set $A\subset\R^n$ one has
\begin{equation}\label{eq:mainintro}
\lim_{\epsilon\to 0}\I_\epsilon(\uno_A)=\frac 12\min\bigl\{1,\Per(A)\bigr\}.
\end{equation}
In particular, $\lim_\epsilon\I_\epsilon(\uno_A)< 1/2$ implies that $A$ has finite perimeter and $\Per(A)=2\lim_\epsilon\I_\epsilon(\uno_A)$. 
\end{theorem}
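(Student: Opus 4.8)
\noindent\emph{Sketch of the proposed proof.} The starting point is the elementary identity that, for $f=\uno_A$ and any cube $Q'$, writing $t:=|A\cap Q'|/|Q'|$, one has $\mean{Q'}|\uno_A-\mean{Q'}\uno_A|=2t(1-t)$; consequently
$$
\I_\epsilon(\uno_A)=\epsilon^{n-1}\sup_{{\mathcal F}_\epsilon}\sum_{Q'\in{\mathcal F}_\epsilon}2t_{Q'}(1-t_{Q'}),\qquad t_{Q'}:=\frac{|A\cap Q'|}{|Q'|}\,.
$$
Since $2t(1-t)\le\tfrac12$ and $\#{\mathcal F}_\epsilon\le\epsilon^{1-n}$, this already yields $\I_\epsilon(\uno_A)\le\tfrac12$ for every $\epsilon$, and the theorem follows once we establish the two matching estimates $\limsup_{\epsilon}\I_\epsilon(\uno_A)\le\tfrac12\Per(A)$ (only meaningful when $\Per(A)<\infty$) and $\liminf_{\epsilon}\I_\epsilon(\uno_A)\ge\tfrac12\min\{1,\Per(A)\}$.

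For the lower bound the plan is to exhibit good competitors. By De Giorgi's structure theorem, at $\Haus{n-1}$-a.e.\ $x_0\in\partial^*A$ the blow-ups $(A-x_0)/r$ converge in $L^1_{\mathrm{loc}}$ to a halfspace $H$ bounded by the hyperplane $\nu_A(x_0)^\perp$ through the origin; since a cube $Q'$ centred at $x_0$ is symmetric under $x\mapsto 2x_0-x$, this forces $t_{Q'}\to\tfrac12$, hence $2t_{Q'}(1-t_{Q'})\to\tfrac12$, as the side shrinks. An Egorov argument makes this convergence uniform on a compact $K\subset\partial^*A$ with $\Haus{n-1}(K)\ge\min\{1,\Per(A)\}-\delta$, and the $(n-1)$-rectifiability of $\partial^*A$ lets one pack, for $\epsilon$ small, a disjoint family of $(1-o(1))\,\epsilon^{1-n}\bigl(\min\{1,\Per(A)\}-\delta\bigr)$ cubes of side $\epsilon$ with centres near $K$: on each almost-flat $C^1$ patch of $\partial^*A$ one tiles a thin slab by $\epsilon$-cubes aligned with the tangent hyperplane, which uses (up to negligible boundary effects) essentially one cube per $\epsilon^{n-1}$ of surface area. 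Inserting this family into the formula above and letting $\delta\downarrow0$ gives the claim; if $\Per(A)=\infty$ one first restricts to a ball on which $\Haus{n-1}(\partial^*A\cap B_R)$ is as large as desired.

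The sharp upper bound is the crux, and the step I expect to be hardest; the non-sharp bound $\I_\epsilon(\uno_A)\le C_n\Per(A)$ is by contrast routine, since the relative isoperimetric inequality forces $2t_{Q'}(1-t_{Q'})$ to be small whenever $\Per(A,Q')\ll\epsilon^{n-1}$. To capture the constant $\tfrac12$ one uses the geometric inequality
$$
4\,|H\cap C|\,\bigl(1-|H\cap C|\bigr)\ \le\ \Haus{n-1}\bigl(\partial H\cap C\bigr)
$$
valid for the unit cube $C$ and any halfspace $H$, with equality exactly for coordinate bisections; this is closely related to Hensley's lower bound $\Haus{n-1}(\text{central hyperplane section of }C)\ge1$ and should follow from the Brunn--Minkowski concavity of the slice-area function. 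Rescaled, it says $2t_{Q'}(1-t_{Q'})\,\epsilon^{n-1}\le\tfrac12\,\Haus{n-1}(\partial H\cap Q')$ when $\uno_A$ is replaced on $Q'$ by $\uno_H$. One then exploits the $C^1$-rectifiability of $\partial^*A$: given $\delta>0$, cover $\partial^*A$ up to $\Haus{n-1}$-measure $\delta$ by finitely many $C^1$ hypersurface patches, in suitable tubular neighbourhoods of which $A$ equals one of the two sides, the tube radii chosen so that $\partial^*A$ meets the tube boundaries in an $\Haus{n-1}$-null set. If $Q'$ lies inside such a neighbourhood, then $A\cap Q'$ differs from the tangent halfspace $H_{Q'}$ on $Q'$ by at most $\epsilon^n\omega(\epsilon)$ in measure, with $\omega(\epsilon)\to0$ uniformly, and $\Haus{n-1}(\partial H_{Q'}\cap Q')\le\Per(A,Q')+C\epsilon^{n-1}\omega(\epsilon)$; together with the geometric inequality and $|2t(1-t)-2s(1-s)|\le2|t-s|$ this gives $2t_{Q'}(1-t_{Q'})\epsilon^{n-1}\le\tfrac12\Per(A,Q')+C\epsilon^{n-1}\omega(\epsilon)$, which sums over such cubes to $\tfrac12\Per(A)+C\omega(\epsilon)$. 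Cubes with $\Per(A,Q')=0$ contribute $0$ (there $\uno_A$ is a.e.\ constant), while the cubes meeting the exceptional part of $\partial^*A$ or straddling a tube boundary are, by the choice of radii and $(n-1)$-rectifiability, few enough and carry little enough perimeter that the crude isoperimetric estimate bounds their total contribution by $C_n\delta+o(1)$. Sending $\epsilon\to0$ and then $\delta\to0$ yields $\limsup_\epsilon\I_\epsilon(\uno_A)\le\tfrac12\Per(A)$, and combined with $\I_\epsilon(\uno_A)\le\tfrac12$ this completes the proof.
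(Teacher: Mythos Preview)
Your lower bound argument has a genuine gap in the case $\Per(A)=\infty$. You propose to ``restrict to a ball on which $\Haus{n-1}(\partial^*A\cap B_R)$ is as large as desired'' and then run the same blow-up/Egorov/packing argument. But De~Giorgi's structure theorem---the rectifiability of $\partial^*A$ and the convergence of blow-ups to halfspaces---is available only for sets of \emph{locally finite} perimeter. If $\Per(A,B_R)=\infty$ for some ball $B_R$, then $\partial^*A\cap B_R$ need not be rectifiable, $\Haus{n-1}(\partial^*A\cap B_R)$ need not be finite, and there is no reason for blow-ups of $A$ to converge to anything. The paper treats this case (Section~3.2) by a completely different mechanism that avoids fine structure theory: one approximates $A$ in $L^1_{\rm loc}$ by unions $A_{\delta/2}$ of dyadic $\delta/2$-cubes, uses lower semicontinuity of perimeter to force $\Per(A_{\delta/2})\to\infty$, and reads off from the combinatorics of the boundary cubes (Lemma~3.1) many $\delta$-cubes with volume fraction bounded away from $0$ and $1$; a separate sliding/continuity argument (Lemma~3.2) then upgrades these to $\epsilon$-cubes with volume fraction exactly $\tfrac12$.

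For the upper bound you take a much harder route than necessary, and one with its own soft spots. You state the key inequality $4t(1-t)\le\Haus{n-1}(\partial H\cap C)$ only for halfspaces $H$ and then try to approximate $A$ locally by tangent halfspaces; this forces a delicate error analysis (the assertion that in tubular neighbourhoods ``$A$ equals one of the two sides'' is false for general sets of finite perimeter, and the estimate $\Haus{n-1}(\partial H_{Q'}\cap Q')\le\Per(A,Q')+C\epsilon^{n-1}\omega(\epsilon)$ is not obvious since lower semicontinuity of perimeter points the wrong way). The paper instead invokes Hadwiger's sharp relative isoperimetric inequality $|E|(1-|E|)\le\tfrac14\Per(E,Q)$, valid for \emph{every} measurable $E\subset Q$ (proved in the appendix via Bobkov's functional inequality), which immediately gives $2t_{Q'}(1-t_{Q'})\epsilon^{n-1}\le\tfrac12\Per(A,Q')$ for each cube in the family and hence $\I_\epsilon(\uno_A)\le\tfrac12\Per(A)$ for every $\epsilon>0$ by summation---no blow-up, no approximation, no limit.
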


We present the proof of Theorem~\ref{thm:mainintro} in Section~\ref{sec:main}. Although we confine ourselves
to the most interesting case $n>1$ throughout this paper, we point out in Section~3.4 that Theorem~\ref{thm:mainintro}
still holds in the case $n=1$ and we present a brief proof; since in this case $\Per(A)$ is an integer, we infer that
$\lim_\epsilon\I_\epsilon(\uno_A)< 1/2$ implies that either $A$ or $\R\setminus A$ are Lebesgue negligible.

Returning to the case $n>1$, we can also understand better the role of the upper bound on cardinality with the formula
\begin{equation}\label{eq:june3}
\lim_{\epsilon\downarrow 0}
\sup_{{\mathcal F}_{\epsilon,M}} \epsilon^{n-1}\sum_{Q'\in\mathcal F_{\epsilon,M}}
2\mean{Q'}\Bigl|\uno_A(x)-\mean{Q'} \uno_A\Bigr|\, dx= \min\{M,\Per(A)\} \qquad \forall\,M>0,
\end{equation}
where ${\mathcal F}_{\epsilon,M}$ denotes a collection of $\epsilon$-cubes with arbitrary orientation
and cardinality not exceeding $M\epsilon^{1-n}$. 
The proof of \eqref{eq:june3} can be achieved by a scaling argument.
Indeed,
setting $\rho=M^{-1/(n-1)}$, it suffices to apply \eqref{eq:mainintro} to $\tilde A=\rho A$, noticing that
$$
\min\{1,\Per(\tilde{A})\}=\frac{1}{M}\min\{M,\Per(A)\}=\rho^{n-1}\min\{M,\Per(A)\},
$$
that $\epsilon^{1-n}=M(\epsilon/\rho)^{1-n}$, and finally 
that the transformation $x\mapsto x/\rho$ maps $\tilde A$ to $A$, as well as $\epsilon$-cubes to $\epsilon/\rho$-cubes.

In Section \ref{sect:XX} we return to the framework of measurable subsets $A$ of 
a Lipschitz domain $\Omega$, and we establish that
\begin{equation}
\label{eq:X5}
\lim_{\epsilon \to 0}\I_\epsilon(\uno_A,\Omega)=\frac12 \min\bigl\{1,\Per(A,\Omega)\bigr\},
\end{equation}
where $\I_\epsilon(\uno_A,\Omega)$ is a localized version of \eqref{defIeps} where we restrict the supremum over cubes contained in $\Omega$.

Also, going back to the setting of \cite{bbm}, in Section \ref{sect:XX} we prove that
$$
[\uno_A]\leq \frac12 \min\{1,\Per(A,Q)\} \leq C\,[\uno_A]
$$
for every measurable subset of $Q$.

Then, in Section \ref{sect:no constraint} we discuss how removing the bound on the cardinality allows us to obtain a new characterization 
both of sets of finite perimeter and of the perimeter, independent of the theory of distributions. In a somewhat different direction, see also
\cite{X1}, \cite[Corollary 3 and Equation (46)]{X2}, and \cite{X3}.

We conclude this introduction with a few more words on the strategy of proof. As illustrated in Remark~\ref{rem:simple_idea}, 
using the canonical decomposition in cubes, it is not too difficult to show the existence of dimensional constants $\xi_n,\eta_n>0$ satisfying
\begin{equation}\label{eq:trivial_idea}
\limsup_{\epsilon\downarrow 0}\I_\epsilon(\uno_A)<\xi_n\qquad\Longrightarrow\qquad\Per(A)\leq \eta_n
\limsup_{\epsilon\downarrow 0}\I_\epsilon(\uno_A)
\end{equation}
for any measurable set $A\Subset Q$.
This idea can be very much refined, leading to the proof of the inequality $\liminf_\epsilon\I_\epsilon(\uno_A)\geq 1/2$
whenever $P(A)=+\infty$. Since $\I_\epsilon(\uno_A)\leq 1/2$, see \eqref{eq:trivial} below, this proves our main result for sets of infinite perimeter.
For sets of finite perimeter, the inequality $\leq$ in \eqref{eq:mainintro} relies on the relative isoperimetric inequality in the cube 
with sharp constant, see \eqref{eq:Hadwiger} below, while the inequality $\geq$ relies on a blow-up argument.\\

This paper originated from a meeting in Naples in November 2013, dedicated to Carlo Sbordone's 65th birthday, where three of us
(LA, HB, and AF) met. On that occasion HB presented some results from \cite{bbm} and formulated a conjecture which
became the motivation and the main result of the present paper. For this and many other reasons, we are happy to dedicate this paper to
Carlo Sbordone.

\section{Notation and preliminary results}

Throughout this paper we assume $n\geq 2$.
We denote by $\#\,F$ the cardinality of a set $F$, by $A^c$ the complement of $A$, by $|A|$ the Lebesgue measure of a (Lebesgue) measurable set 
$A\subset\R^n$, by $\Haus{n-1}$ the Hausdorff $(n-1)$-dimensional measure.
For $\delta>0$, we say that $Q'$ is a $\delta$-cube if $Q'$ is a cube obtained by rotating and translating the standard $\delta$-cube
$(0,\delta)^n$. 

\subsection{$BV$ functions, sets of finite perimeter, and relative isoperimetric inequalities}

Given $\Omega\subset\R^n$ open and $f\in L^1_{\rm loc}(\Omega)$, we define
\begin{equation}\label{eq:defper}
|Df|(\Omega):=\sup\left\{\int f(x) \,{\rm div}\phi(x)\,dx:\ \phi\in C^1_c(\Omega;\R^n),\,\,|\phi|\leq 1\right\} .
\end{equation}
By construction, $f\mapsto |Df|(\Omega)\in [0,\infty]$ is lower semicontinuous w.r.t. the
$L^1_{\rm loc}(\Omega)$ convergence. By Riesz theorem, whenever $|Df|(\Omega)$ is
finite the distributional derivative $Df=(D_1f,\ldots,D_nf)$ of $f$ is a vector-valued measure with finite
total variation, therefore $f\in BV_{\rm loc}(\Omega)$.\footnote{Recall that $f \in BV(U)$
if $f \in L^1(U)$ and $|Df|(U)<\infty$. } In addition, the total variation of $Df$
coincides with the supremum in \eqref{eq:defper} (thus, justifying our notation). 

We will mostly apply these concepts when $f=\uno_A$ is a characteristic function
of a measurable set $A\subset \R^n$. In this case we use the
traditional and more convenient notation
$$
\Per(A,\Omega)=|D\uno_A|(\Omega),\qquad\quad \Per(A)=\Per(A,\R^n).
$$
A key property of the perimeter is the so-called relative isoperimetric inequality: for any bounded open set $\Omega\subset\R^n$
with Lipschitz boundary one has
$$
|E|\cdot |\Omega\setminus E|\leq c(\Omega)\Per(E,\Omega)\quad\text{for any measurable set $E\subset\Omega$.}
$$
In the case when $\Omega$ is the unit cube $Q$, we will need the inequality with sharp constant:
\begin{equation}\label{eq:Hadwiger}
|E|(1-|E|)\leq \frac 14\Per(E,Q) \quad\text{for any measurable set $E\subset Q$.}
\end{equation}
This inequality is originally due to H. Hadwiger \cite{Ha} for polyhedral subsets of the cube. Far reaching variants
appeared subsequently in the literature (see e.g. S.G. Bobkov \cite{bo1,bo2}, D. Bakry and M. Ledoux \cite{bl}, F. Barthe and
B. Maurey \cite{bm}, and their references). However we could not find \eqref{eq:Hadwiger} stated in the required generality
used here (it is often formulated with the Minkowski content instead of the perimeter, so that some extra approximation
argument is anyhow needed). For this reason, and for the
reader's convenience, we have included in the appendix a proof of \eqref{eq:Hadwiger} based on the results
of \cite{bm}, in any number of space dimensions.

\subsection{Fine properties of sets of finite perimeter}

In \S\ref{sec:lbound} we will need finer properties of sets of finite perimeter $A$ in an open set $\Omega$. 
In \cite{dg1}, De Giorgi singled out a set ${\mathcal F}A$ of finite $\Haus{n-1}$-measure, called reduced boundary, on which
$|D\uno_A|$ is concentrated and $A$ is asymptotically close to a half-space. More precisely $|D\uno_A|=\Haus{n-1}\res {\mathcal F}A$, i.e.,
$|D\uno_A|(E)=\Haus{n-1}(E\cap {\mathcal F}A)$ for any Borel set $E\subset\Omega$. 
De Giorgi also proved that $|D\uno_A|$-almost all of the reduced boundary can be covered
by a sequence of $C^1$ hypersurfaces $\Gamma_i$ (the so-called rectifiability property). A few years later, Federer 
in \cite{Fed1} extended
these results to the so-called essential boundary, namely the complement of density 0 and density 1 sets:
\begin{equation}\label{eq:defbf}
\partial^* A:=\left\{x\in\R^n:\ \limsup_{r\to 0^+}\frac{|B_r(x)\cap A|}{|B_r(x)|}>0\quad \text{and}\quad
\limsup_{r\to 0^+}\frac{|B_r(x)\setminus A|}{|B_r(x)|}>0\right\}.
\end{equation}
Federer also slightly strenghtned the rectifiability result, by replacing $|D\uno_A|$ with $\Haus{n-1}$. 
We collect in the next theorem the results we need on sets of finite perimeter.

\begin{theorem} [De Giorgi-Federer]\label{thm:dgfe}
Let $A$ be a set of finite perimeter in $\Omega$. Then the following properties hold:
\begin{itemize}
\item[(i)]
\begin{equation}\label{eq:repPer}
|D\uno_A|(E)=\Haus{n-1}(E\cap \partial^*A)\qquad\text{for any Borel set $E\subset\Omega$;}
\end{equation}
\item[(ii)] there exist embedded $C^1$ hypersurfaces $\Gamma_i\subset\R^n$ satisfying
\begin{equation}\label{eq:rettibou}
\Haus{n-1}\biggl(\Omega\cap\partial^* A\setminus\bigcup_{i=1}^\infty\Gamma_i\biggr)=0;
\end{equation}
\item[(iii)] if $\Gamma_i$ are as in \eqref{eq:rettibou}, for $\Haus{n-1}$-a.e. $x\in\Omega\cap\partial^* A\cap\Gamma_i$ 
there exists a half-space $H_A(x)$ with inner normal $\nu_A(x)$ orthogonal to $\Gamma_i$ at $x$ such that
$\uno_{(A-x)/r} \to \uno_{H_A(x)}$ in $L^1_{\rm loc}(\R^n)$ as $r\rightarrow 0^+$;
\item[(iv)] if $F$ is any other set with finite perimeter in $\Omega$, 
$H_A(x)=\pm H_F(x)$ $\Haus{n-1}$-a.e. in $\Omega\cap\partial^*A\cap\partial^* F$. 
\end{itemize}
\end{theorem}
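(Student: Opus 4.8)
The statement collects the classical structure theory of De Giorgi \cite{dg1} and Federer \cite{Fed1}; I sketch how a self-contained proof is organized. The natural starting point is the \emph{reduced boundary} $\mathcal FA$, defined as the set of points $x$ at which the difference quotients $D\uno_A(B_r(x))/|D\uno_A|(B_r(x))$ converge, as $r\to 0^+$, to a vector $\nu_A(x)$ of unit length. By the Besicovitch differentiation theorem this limit exists and has modulus $1$ for $|D\uno_A|$-a.e.\ $x$, so $|D\uno_A|$ is concentrated on $\mathcal FA$ and one has the polar decomposition $D\uno_A=\nu_A\,|D\uno_A|$. All four assertions will first be proved with $\mathcal FA$ in place of $\partial^*A$, and only at the end will one show $\Haus{n-1}(\mathcal FA\triangle\partial^*A)=0$.

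The technical heart of the argument is De Giorgi's \emph{blow-up theorem}: for every $x\in\mathcal FA$ the rescaled sets $A_{x,r}:=(A-x)/r$ converge in $L^1_{\rm loc}(\R^n)$ as $r\to 0^+$ to the half-space $H_A(x)$ through the origin with inner normal $\nu_A(x)$. I would prove this in three moves: (a) a \emph{density estimate} $|D\uno_A|(B_r(x))\le Cr^{n-1}$, valid for $x\in\mathcal FA$ and $r$ small, which gives uniform local perimeter bounds for the family $A_{x,r}$ and hence, by $BV_{\rm loc}$ compactness, precompactness in $L^1_{\rm loc}$; (b) the observation that any subsequential limit $E$ has measure-theoretic normal a.e.\ equal to the \emph{fixed} vector $\nu_A(x)$ — this is where the definition of $\mathcal FA$ is used, together with the lower semicontinuity of the total variation to rule out loss of mass; (c) the rigidity fact that a set of finite perimeter whose normal is a.e.\ a constant direction is invariant under translations orthogonal to that direction, hence depends on a single variable, and a $\{0,1\}$-valued function of one variable with nonnegative-measure derivative is the indicator of a half-line. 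Since $H_A(x)$ is uniquely determined by $\nu_A(x)$, the whole family converges, not merely subsequences.

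The blow-up yields at once, for $x\in\mathcal FA$, that $A$ has density $1/2$ at $x$ (so $\mathcal FA\subset\partial^*A$) and the refined asymptotics $|D\uno_A|(B_r(x))\sim\omega_{n-1}r^{n-1}$. By the comparison theory of densities for Radon measures this forces $|D\uno_A|=\Haus{n-1}\res\mathcal FA$; in particular $\Haus{n-1}(\mathcal FA)<\infty$. Moreover the blow-up of the boundary at each $x\in\mathcal FA$ is a hyperplane, i.e.\ $\mathcal FA$ admits an approximate tangent $(n-1)$-plane everywhere, so a rectifiability criterion produces embedded $C^1$ hypersurfaces $\Gamma_i$ with $\Haus{n-1}(\mathcal FA\setminus\bigcup_i\Gamma_i)=0$, and on each $\Gamma_i$ the vector $\nu_A$ coincides $\Haus{n-1}$-a.e.\ with the classical normal to $\Gamma_i$. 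This establishes (ii) and (iii), and the representation (i), all with $\mathcal FA$ in place of $\partial^*A$.

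The main obstacle is Federer's improvement $\Haus{n-1}(\partial^*A\setminus\mathcal FA)=0$, which transfers everything to the essential boundary. The inclusion $\mathcal FA\subset\partial^*A$ being already known, one must show that the set $\partial^*A\setminus\mathcal FA$ is $\Haus{n-1}$-negligible: covering it by small balls and combining the relative isoperimetric inequality with the perimeter representation just obtained, one bounds its $\Haus{n-1}$-content by an arbitrarily small multiple of $|D\uno_A|$ of a shrinking neighbourhood of a set disjoint from $\mathcal FA$, on which $|D\uno_A|$ vanishes; hence the content is zero. Once $\Haus{n-1}(\partial^*A\triangle\mathcal FA)=0$, properties (i)--(iii) hold as stated. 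Finally, for (iv), given another set $F$ of finite perimeter I would intersect the hypersurface families from (ii) for $A$ and for $F$: at $\Haus{n-1}$-a.e.\ $x\in\partial^*A\cap\partial^*F$ the approximate tangent planes of $\partial^*A$ and $\partial^*F$ both equal the tangent of a common $C^1$ surface through $x$, so $\nu_A(x)$ and $\nu_F(x)$ are unit vectors spanning the same one-dimensional orthogonal complement, whence $\nu_A(x)=\pm\nu_F(x)$ and $H_A(x)=\pm H_F(x)$.
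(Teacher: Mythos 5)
Your outline is correct and takes essentially the same route as the paper: for (i)--(iii) the paper simply cites De Giorgi, Federer and \cite[Theorems~3.59 and 3.61]{afp}, and your sketch (reduced boundary via Besicovitch differentiation, De Giorgi's blow-up with density bounds and rigidity, the representation $|D\uno_A|=\Haus{n-1}\res{\mathcal F}A$ with rectifiability, and Federer's covering argument for $\Haus{n-1}(\partial^*A\setminus{\mathcal F}A)=0$) is precisely the standard proof contained in those references. For (iv) your argument coincides with the paper's: combine (iii) with the elementary fact that two embedded $C^1$ hypersurfaces have the same tangent plane at $\Haus{n-1}$-a.e.\ point of their intersection, which forces $\nu_A=\pm\nu_F$ and hence $H_A=\pm H_F$ there.
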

\begin{proof} For the first three properties, see \cite{dg1}, \cite{Fed1}, or 
\cite[Theorems~3.59 and 3.61]{afp}. Taking (iii) into account, the last assertion (iv) follows from the elementary property
$$
{\rm Tan}(\Gamma,x)={\rm Tan}(\tilde\Gamma,x)\qquad\text{for $\Haus{n-1}$-a.e. $x\in\Gamma\cap\tilde\Gamma$}
$$
whenever $\Gamma$ and $\tilde\Gamma$ are $C^1$ embedded hypersurfaces.
\end{proof}

Since $BV_{\rm loc}(\Omega)\cap L^\infty(\Omega)$ is easily seen to be an algebra with
$$
|D(uv)|\leq \|u\|_\infty|Du|+\|v\|_\infty|Dv|,
$$
it turns out that the class of sets of finite perimeter in an open set $\Omega$ is stable under relative 
complement, union, and intersection. We need also the following property:
\begin{equation}\label{eq:symmbou} 
\Haus{n-1}\bigl(\Omega\cap\partial^*(E\Delta F)\setminus (\partial^*E\Delta\partial^*F)\bigr)=0
\quad\text{whenever $E,F$ have finite perimeter in $\Omega$.}
\end{equation}
In order to prove it, we first notice that $\partial^*$ is invariant under complement and 
$\partial^*(E\cup F)\subset\partial^* E\cup\partial^* F$, 
$\partial^*(E\cap F)\subset\partial^* E\cup\partial^* F$, hence
it follows that $\partial^*(E\Delta F)\subset\partial^* E\cup\partial^* F$.
Then, take $x\in\Omega\cap\partial^*(E\Delta F)$
and assume (possibly permuting $E$ and $F$) that $x\in\partial^* E$. By property (iii) of Theorem~\ref{thm:dgfe}, possibly
ignoring a $\Haus{n-1}$-negligible set, we can also assume that $(E-x)/r$ converges as $r\rightarrow 0^+$ to a half-space
$H_E(x)$. Still ignoring another $\Haus{n-1}$-negligible set, we have then three possibilities for $F$: either $x$ is a point of density
1, or a point of density $0$, or there exists a half-space $H_F(x)$ such that $(F-x)/r\to H_F(x)$ as $r\rightarrow 0^+$.
In the first two cases it is clear that $x\in\partial^* E\setminus\partial^* F$ and we are done. In the third case, we know by property
(iv) of Theorem~\ref{thm:dgfe} that $H_E(x)=\pm H_F(x)$ for $\Haus{n-1}$-a.e. $x\in\Omega\cap\partial^*E\cap\partial^* F$. But
$H_E(x)=H_F(x)$ implies that $x$ is a point of density 0 for $E\Delta F$ and
$H_E(x)=-H_F(x)$ implies that $x$ is a point of density 1 for $E\Delta F$, so the third case can occur only on a 
$\Haus{n-1}$-negligible set.

\section{Proof of Theorem \ref{thm:mainintro} }\label{sec:main}


The proof of Theorem \ref{thm:mainintro} is quite involved and will take all of this section.
Notice that since
\begin{equation}\label{eq:trivial}
\mean{Q'}\Bigl|\uno_A(x)-\mean{Q'}\uno_A\Bigr|\,dx=2\frac{|Q'\cap A|\cdot |Q'\setminus A|}{|Q'|^2}\leq\frac 12
\end{equation}
for any $\epsilon$-cube $Q'$,
we clearly have $\I_\epsilon(\uno_A)\leq 1/2$.

We now prove the theorem is three steps: first we show that $\I_\epsilon(\uno_A)\leq\Per(A)/2$
for all $\epsilon >0$, which proves that $\I_\epsilon(f)\leq \frac 1 2\min\left\{1,\Per(A)\right\}$.
Then we prove that $\liminf_{\epsilon\to 0^+}\I_\epsilon(f)\geq \frac 1 2\min\left\{1,\Per(A)\right\}$
first when $\Per(A)=\infty$ (the non-rectifiable case) and finally when
$A$ has finite perimeter (the rectifiable case).

\subsection{Upper bound}\label{sec:upperbound}

We prove that $\I_\epsilon(\uno_A)\leq\Per(A)/2$ for all $\epsilon>0$. For this, we may obviously assume $\Per(A)<\infty$,
hence $f=\uno_A\in BV_{\rm loc}(\R^n)$. By the additivity of $\Per(A,\cdot)$, it suffices to show that if $Q'$ is an 
$\epsilon$-cube, then
$$
\frac{|Q'\cap A|\cdot |Q'\setminus A|}{|Q'|^2}\leq \frac 14 \epsilon^{1-n}\Per(A,Q').
$$
After rescaling, this inequality reduces to \eqref{eq:Hadwiger}, which proves the desired result. 

\subsection{Lower bound: the non-rectifiable case}
\label{sect:unrectif}

Here we assume that $\Per(A)=\infty$ and we prove, under this assumption, that $\liminf_\epsilon\I_\epsilon(f)\geq 1/2$.
Before coming to the actual proof we sketch in the next remark the proof of \eqref{eq:trivial_idea}, announced in
the introduction.

\begin{remark}\label{rem:simple_idea} {\rm Let us consider the canonical subdivision (up to a Lebesgue negligible set) 
of $(0,1)^n$ in $2^{hn}$ cubes $Q_{i,h}$ with length side $2^{-h}$. We define on the scale $\epsilon=2^{-h}$ an approximate interior ${\rm Int}_h(A)$ of $A$
by considering the set
$$
I_h:=\biggl\{i\in\{1,\ldots,2^{hn}\}:\ \mean{Q_{i,h}}\uno_A>\frac 34\biggr\}
$$
and taking the union of the cubes $Q_{i,h}$, $i\in I_h$. Analogously we define a set of indices $E_h$ and the
corresponding approximate exterior ${\rm Ext}_h(A)={\rm Int}_h(Q\setminus A)$. We denote by
$F_h$ the complement of $I_h\cup E_h$ and by ${\rm Bdry}_h(Q)$ the union of the corresponding cubes.

Since ${\rm Int}_h(A)\to A$ in $L^1_{\rm loc}$ as $h \to \infty$, by the lower semicontinuity of the perimeter it suffices to
give a uniform estimate on $\Per({\rm Int}_h(A))=\Haus{n-1}(\partial \,{\rm Int}_h(A))$ as $h\to\infty$ under a smallness
assumption on $\limsup_h\I_{2^{-h}}(\uno_A)$.

Since $\textmean{Q_{i,h}}|\uno_A(x)-\textmean{Q_{i,h}}\uno_A|\,dx\geq 1/4$ for all $i\in F_h$ (by definition of $F_h$), we obtain that
\begin{equation}\label{eq:trivial_idea1}
\I_{2^{-h}}(\uno_A)<\frac{1}{4}\qquad\Longrightarrow\qquad \#\,F_h
\leq 4 \,\I_{2^{-h}}(\uno_A)\,(2^{-h})^{1-n}<(2^{-h})^{1-n},
\end{equation}
which provides a uniform estimate on $\Haus{n-1}(\partial\, {\rm Bdry}_h(A))$. Hence, to control
$\Haus{n-1}(\partial \,{\rm Int}_h(A))$ it
suffices to bound the number of faces $F\subset Q$ common to a cube $Q_{i,h}$ and
a cube $Q_{j,h}$, with $i\in I_h$ and $j\in E_h$. For this, notice that if $\tilde{Q}$ is any cube with side length $2^{1-h}$ containing
$Q_{i,h}\cup Q_{j,h}$, it is easily seen that 
$$
\mean{\tilde{Q}}\Bigl|\uno_A(x)-\mean{\tilde{Q}}\uno_A\Bigr|\,dx\geq 2^{-1-n}
$$
and this leads once more to an estimate of the number of these cubes with $(2^{1-h})^{1-n}$ provided
$\I_{2^{1-h}}(\uno_A)<2^{-1-n}$.
Combining this estimate with the uniform estimate on $\Haus{n-1}(\partial \,{\rm Bdry}_h(A))$ leads to \eqref{eq:trivial_idea}. 
}\end{remark}

We now refine the strategy above to prove:

\begin{lemma} \label{lem1} Let $K>0$ and $A\subset\R^n$ measurable with $\Per(A)=\infty$. 
Then there exists $\delta_0=\delta_0(K,A)>0$ with the following property: for all
$\delta\in (0,\delta_0]$ it is possible to find a disjoint collection $\mathcal U_\delta$ of $\delta$-cubes
satisfying:
\begin{itemize}
\item[(a)]  $2^{-n-1}<|Q'\cap A|/|Q'|< 1-2^{-n-1}$ for all $Q'\in\mathcal U_\delta$;
\item[(b)] $\#\,\mathcal U_\delta>K\delta^{-n+1}$;
\item[(c)] if $\mathcal U_\delta=\{Q_\delta(x_i)\}_{i\in I}$, the homothetic cubes $\{Q_{2\delta}(x_i)\}_{i\in I}$ are pairwise disjoint.
\end{itemize}
\end{lemma}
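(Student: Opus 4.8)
The plan is to prove the (essentially equivalent) statement that \emph{admissible} cubes --- those on which $A$ has intermediate density --- are necessarily abundant at every small scale, and then harvest the required collection from them. First I would reduce conditions (a)--(c) to a single packing estimate. Call a $\delta$-cube $Q'$ admissible if $2^{-n-1}<|Q'\cap A|/|Q'|<1-2^{-n-1}$, and let $N(\delta)$ be the supremum of the cardinalities of pairwise disjoint families of admissible $\delta$-cubes. From any disjoint family of admissible $\delta$-cubes a standard greedy/Vitali selection extracts a subfamily of comparable size --- losing only a dimensional factor $c_n$ --- whose cubes with the same centres and side $2\delta$ are pairwise disjoint; such a subfamily automatically satisfies (a) and (c). Hence it suffices to show that for every $K>0$ there is $\delta_0>0$ with $N(\delta)>c_n^{-1}K\,\delta^{1-n}$ for all $\delta\in(0,\delta_0]$; equivalently, $\liminf_{\delta\downarrow0}\delta^{n-1}N(\delta)=+\infty$. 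Note that the orientation‑free definition of $\I_\epsilon$, hence the freedom to use translated/rotated cubes in the lemma, is exactly what lets us work with $N(\delta)$ rather than a fixed‑grid count.

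I would establish this by contradiction. If it fails, there are $M_0>0$ and $\delta_j\downarrow0$ with $N(\delta_j)\le M_0\delta_j^{1-n}<\infty$; in particular there are only finitely many admissible cubes on the grid $\delta_j\Z^n$, and on each shifted grid as well. Classify every cube $Q'$ of $\delta_j\Z^n$ as \emph{full} ($|A\cap Q'|\ge(1-2^{-n-1})|Q'|$), \emph{empty} ($|A\cap Q'|\le 2^{-n-1}|Q'|$), or admissible, and let $\mathrm{Int}_j$ be the union of the full cubes. Since the piecewise‑constant $\delta_j$‑grid averages of $\uno_A$ converge to $\uno_A$ in $L^1_{\rm loc}$ as $\delta_j\downarrow0$, splitting the domain according to whether that average lies within $2^{-n-2}$ of $\uno_A$ shows that $\uno_{\mathrm{Int}_j}\to\uno_A$ in $L^1_{\rm loc}$.

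The heart of the argument is a uniform bound on $\Per(\mathrm{Int}_j)$. Its boundary is a union of $(n-1)$‑faces of grid cubes, each of area $\delta_j^{n-1}$, and every such face either (i) separates a full cube from an admissible one, or (ii) separates a full cube from an empty one. Type (i) faces number at most $2n$ times the number of admissible grid cubes, hence at most $2n\,N(\delta_j)$. For type (ii): if $Q'$ is full and $Q''$ empty share a face orthogonal to $e_k$, the $\delta_j$‑cube $Q'''$ centred at the centre of that face --- half inside $Q'$ and half inside $Q''$ --- has $A$‑density in $[\tfrac12-2^{-n-1},\tfrac12+2^{-n-1}]$, which is contained in $(2^{-n-1},1-2^{-n-1})$ precisely because $n\ge2$; so $Q'''$ is admissible. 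Distinct type‑(ii) faces orthogonal to $e_k$ yield distinct such cubes, all lying on the single shifted grid $\delta_j\Z^n+\tfrac{\delta_j}{2}e_k$, hence for each $k$ there are at most $N(\delta_j)$ of them, giving at most $n\,N(\delta_j)$ type‑(ii) faces in total. Therefore $\Per(\mathrm{Int}_j)\le 3n\,N(\delta_j)\,\delta_j^{n-1}\le 3nM_0$ for all $j$.

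Lower semicontinuity of the perimeter under $L^1_{\rm loc}$ convergence would then give $\Per(A)\le\liminf_j\Per(\mathrm{Int}_j)\le 3nM_0<\infty$, contradicting $\Per(A)=\infty$; this completes the proof. I expect the control of the ``full–against–empty'' faces to be the main obstacle: there is no a priori reason a full cube must be adjacent to an admissible cube rather than directly to an empty one, and it is the straddling‑cube observation --- genuinely using $n\ge2$ --- that turns each such face into an admissible cube on a fixed auxiliary grid, keeping it within the reach of the packing bound $N(\delta_j)$. The $L^1_{\rm loc}$ convergence $\uno_{\mathrm{Int}_j}\to\uno_A$ and the Vitali thinning are routine but both genuinely needed: the former to pass to the limit by lower semicontinuity, the latter to enforce condition (c).
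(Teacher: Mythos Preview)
Your proof is correct and shares the paper's core strategy: approximate $A$ by the union of high-density grid cells, tie the perimeter of this approximation to the count of intermediate-density cubes, and invoke lower semicontinuity of the perimeter. The paper runs the argument directly rather than by contradiction, and on a $\delta/2$-grid with the single threshold $1/2$: every boundary cell of the resulting set $A_{\delta/2}$ then has a neighbor with density $\le 1/2$, and any $\delta$-cube containing the pair is automatically admissible, so no full/empty case split --- and hence no straddling-cube trick --- is needed. Your choice of thresholds $2^{-n-1}$ and $1-2^{-n-1}$ forces that dichotomy and the shifted-grid argument (which is where your use of $n\ge 2$ enters, and it is correct); this is slightly less economical, but your packaging via $N(\delta)$ and the contrapositive makes the logical structure very clean. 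Both proofs finish with essentially the same maximal/Vitali thinning to secure condition~(c).
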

\begin{proof} In this proof we tacitly assume that all cubes have sides parallel to a fixed system of coordinates. 
Partition canonically $\R^n$ in  a family $\{Q_i\}_{i\in\Z^n}$ of $\delta/2$-cubes and set
$$
{\mathcal V}_{\delta/2}:=\left\{Q_i:\ \frac{|Q_i\cap A|}{|Q_i|}>\frac 12\right\},\qquad
A_{\delta/2}:=\bigcup_{Q_i\in{\mathcal V}_{\delta/2}}Q_i .
$$
Since $A_{\delta/2}\to A$ locally in measure as $\delta\to 0^+$, it follows from the lower semicontinuity of $\Per$ that
$$
\liminf_{\delta\to 0^+}\Per(A_{\delta/2})\geq\Per(A)=\infty.
$$
We define $\delta_0=\delta_0(K,A)>0$ by requiring that $\Per(A_{\delta/2})>2^{2n+2}n K$ for all $\delta\in (0,\delta_0]$. 

Fixing now $\delta\in (0,\delta_0]$ and defining
$$
\tilde{\mathcal V}:=\left\{Q_i\in\mathcal V_{\delta/2}:\ \text{$\Haus{n-1}(\partial Q_i\cap \partial A_{\delta/2})>0$}\right\}  
$$
as the subset of ``boundary cubes'' (see Figure \ref{fig1}) we can estimate
$$
2^{2n+2}nK<\Per(A_{\delta/2})\leq 2n\biggl(\frac{\delta}{2}\biggr)^{n-1}\#\,\tilde{\mathcal V},
$$
so that
\begin{equation}\label{eq:3.4}
\#\,\tilde{\mathcal V}>8^n K\delta^{-n+1}.
\end{equation}

\begin{figure}[h]
\includegraphics[scale=0.4]{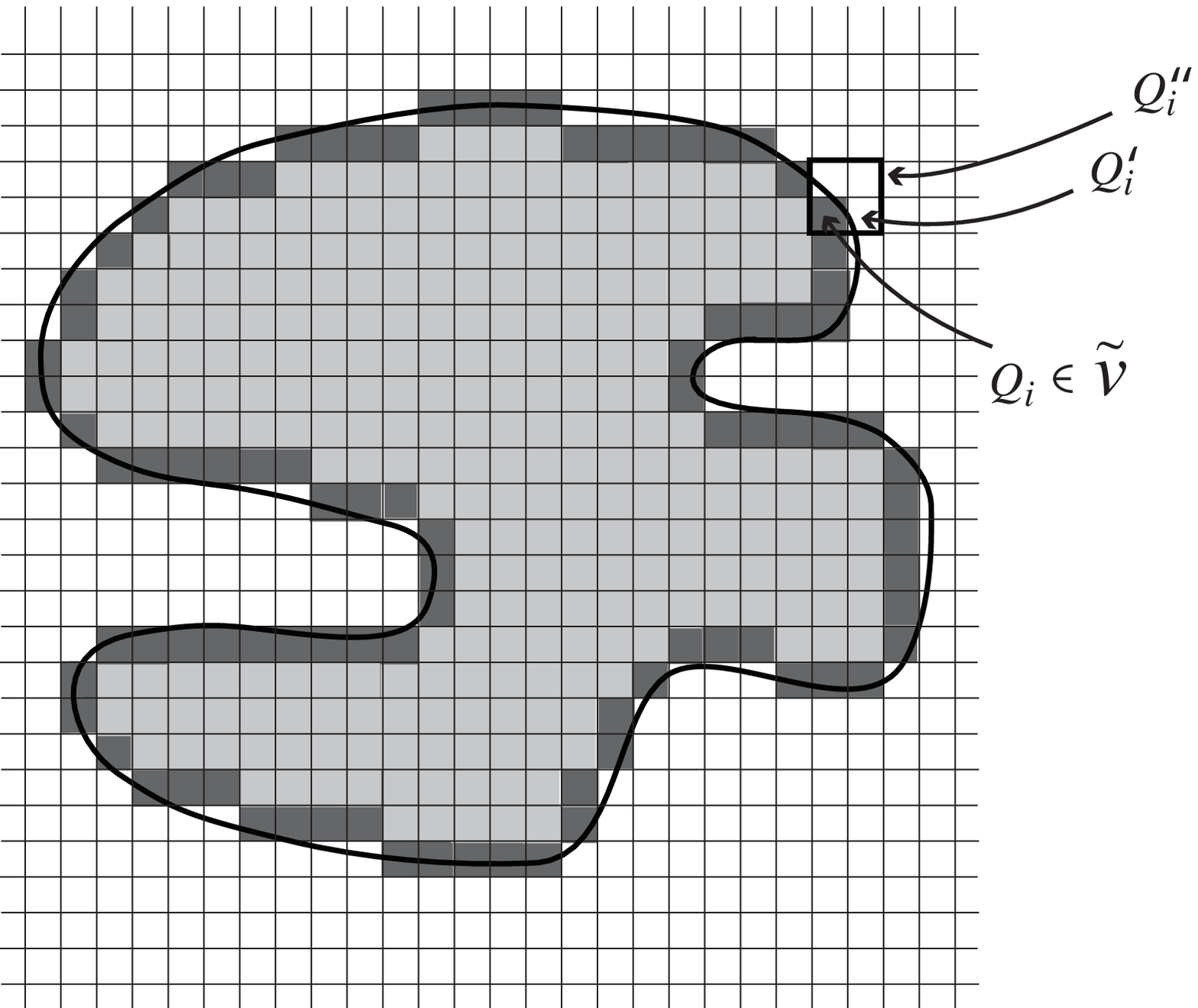}
\caption{\label{fig1}}
\end{figure}

Let $Q_i\in\tilde{\mathcal V}$ and let $Q_i'$ be a $\delta/2$-cube sharing a face $\sigma\subset\partial Q_i\cap\partial A_{\delta/2}$
with $Q_i$ (see Figure 1). Since obviously $Q_i'\notin\mathcal V_{\delta/2}$ we obtain $|Q_i'\cap A|\leq |Q_i'|/2$.  Hence, if $Q_i''$ is any $\delta$-cube containing
$Q_i\cup Q_i'$ we have 
$$
\begin{cases}
\displaystyle{|Q_i''\cap A|\geq |Q_i\cap A|>\frac 12 |Q_i|=\frac 1{2^{n+1}}|Q_i''|,}&\cr\cr
\displaystyle{|Q_i''\cap A^c|\geq |Q_i'\cap A^c|\geq \frac 12 |Q_i'|=\frac 1{2^{n+1}}|Q_i''|.}
\end{cases}
$$
It then suffices to consider a maximal subfamily ${\mathcal V}^*\subset\tilde{\mathcal V}$ of $\delta/2$ cubes with centers
at mutual distance (along at least  one of the coordinate directions) larger or equal than $7\delta/2$ and define
$$
\mathcal U_\delta:=\left\{Q_i'':\ Q_i\in{\mathcal V}^*\right\}.
$$
It is easy to check that $\mathcal U_\delta$ is a family of $\delta$-cubes whose homothetic enlargements by a factor 2 along their
centers are disjoint, so that (c) holds, and that (a) holds as well. In order to check (b), we notice
that the union of the enlargements by a factor $8$ of all cubes in ${\mathcal V}^*$ contains $\tilde{\mathcal V}$, by the maximality
of ${\mathcal V}^*$. Hence, from \eqref{eq:3.4} we get  
$$
\#\,{\mathcal V}^*\geq 8^{-n}\#\,\tilde{\mathcal V}>K\delta^{-n+1}.
$$
\end{proof}

\begin{lemma} \label{lem2} Let $c_0\in (0,1/2)$ and $A\subset (0,1)^n=Q$ measurable, with
\begin{equation}\label{eq:3.5}
c_0<|A|<1-c_0.
\end{equation}
Then, there exists $\epsilon_0=\epsilon_0(c_0,A)>0$ with the following property: for 
$\epsilon\in (0,\epsilon_0)$ there exists a disjoint collection ${\mathcal G}_\epsilon$ of $\epsilon$-cubes
contained in $(0,1)^n$ and satisfying
\begin{equation}\label{eq:3.6}
\frac{|V\cap A|}{|V|}=\frac 12\qquad\forall \,V\in\mathcal G_\epsilon,
\end{equation}
\begin{equation}\label{eq:3.7}
\#\,{\mathcal G}_\epsilon> c_1\epsilon^{-n+1},
\end{equation}
with $c_1>0$ depending only on $c_0$.
\end{lemma}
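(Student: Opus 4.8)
My plan is to work with the continuous ``sliding cube'' function $f_\epsilon(a):=\epsilon^{-n}\,|A\cap(a+(0,\epsilon)^n)|$, defined for $a$ in the parameter box $P_\epsilon:=[0,1-\epsilon]^n$ (so that $a+(0,\epsilon)^n\subset Q$), and to reduce the statement to finding \emph{one} coordinate direction $e_i$ and a set $G\subset[0,1-\epsilon]^{n-1}$ whose $(n-1)$-dimensional measure is bounded below by a constant depending only on $c_0$ (and $n$), such that for every $a'\in G$ the restriction of $f_\epsilon$ to the segment through $a'$ parallel to $e_i$ takes some value $>3/4$ and some value $<1/4$. Granting this, the intermediate value theorem produces, for each $a'\in G$, a point $p(a')$ on that segment with $f_\epsilon(p(a'))=\tfrac12$, i.e.\ the axis-parallel $\epsilon$-cube $p(a')+(0,\epsilon)^n\subset Q$ is exactly half filled by $A$, which is \eqref{eq:3.6}. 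Picking one such $a'$ in each cube of a $2\epsilon$-spaced family of $\epsilon$-cubes in $[0,1-\epsilon]^{n-1}$ meeting $G$ gives, by a routine pigeonhole count, at least $c_1\epsilon^{-n+1}$ of these cubes; they are pairwise disjoint because their positions in the coordinates orthogonal to $e_i$ differ by at least $\epsilon$. This extraction is elementary, so the whole difficulty is in producing $e_i$ and $G$.

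The first ingredient is the Lebesgue density theorem, which is also what forces $\epsilon_0$ to depend on $A$ --- a periodic ``comb'' of density $\ne\tfrac12$ has no half-filled cube at the scale of its period. Since $f_\epsilon(a)\to\uno_A(a)$ for a.e.\ $a$ as $\epsilon\downarrow0$, dominated convergence gives
\[
|U_\epsilon|\to|A|,\qquad |W_\epsilon|\to 1-|A|,\qquad |P_\epsilon\setminus(U_\epsilon\cup W_\epsilon)|\to 0,
\]
where $U_\epsilon:=\{f_\epsilon>3/4\}$ and $W_\epsilon:=\{f_\epsilon<1/4\}$ are disjoint open subsets of $P_\epsilon$. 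I would fix $\epsilon_0=\epsilon_0(c_0,A)$ so small that, for $0<\epsilon<\epsilon_0$, one has $|U_\epsilon|\ge c_0$, $|W_\epsilon|\ge c_0$ and $|U_\epsilon|+|W_\epsilon|\ge 1-\eta$ with $\eta=\eta(\epsilon)$ below a threshold fixed in the next step (also absorbing into $\epsilon_0$ a few harmless $O(\epsilon)$ requirements from the extraction).

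The core of the proof is a lower bound, \emph{uniform in $\epsilon$}, for the number of ``good'' segments summed over all $n$ directions. Let $\pi_i\colon\R^n\to\R^{n-1}$ be the projection forgetting the $i$-th coordinate, with range $H_i\cong[0,1-\epsilon]^{n-1}$. A segment parallel to $e_i$ over $a'\in H_i$ misses $U_\epsilon$ exactly when $a'\notin\pi_i(U_\epsilon)$, and likewise for $W_\epsilon$; hence the good $a'$ in direction $e_i$ form $\pi_i(U_\epsilon)\cap\pi_i(W_\epsilon)$, of measure at least $|\pi_i(U_\epsilon)|+|\pi_i(W_\epsilon)|-|H_i|$. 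Summing over $i$ and using $\sum_{i=1}^n|\pi_i(E)|\ge n\,|E|^{(n-1)/n}$ for measurable $E\subset\R^n$ (Loomis--Whitney plus AM--GM), I obtain
\[
\sum_{i=1}^{n}\bigl|\pi_i(U_\epsilon)\cap\pi_i(W_\epsilon)\bigr|\ \ge\ n\Bigl(|U_\epsilon|^{(n-1)/n}+|W_\epsilon|^{(n-1)/n}-(1-\epsilon)^{n-1}\Bigr).
\]
Now $t\mapsto t^{(n-1)/n}$ is strictly concave on $[0,1]$, with value $0$ at $t=0$ and $1$ at $t=1$, so $c_0^{(n-1)/n}+(1-c_0)^{(n-1)/n}=1+\beta$ for some $\beta=\beta(c_0,n)>0$; by concavity, the minimum of $s^{(n-1)/n}+t^{(n-1)/n}$ over $\{s,t\ge c_0,\ s+t\ge 1-\eta\}$ is attained at $(c_0,1-\eta-c_0)$ and tends to $1+\beta$ as $\eta\downarrow0$, so for $\eta$ below a threshold $\eta_*(c_0,n)$ we get $|U_\epsilon|^{(n-1)/n}+|W_\epsilon|^{(n-1)/n}\ge 1+\beta/2$, whence the right-hand side above is $\ge n\beta/2$. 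Thus some $e_{i^*}$ has $|\pi_{i^*}(U_\epsilon)\cap\pi_{i^*}(W_\epsilon)|\ge\beta/2$, and the reduction of the first paragraph applies with $G:=\pi_{i^*}(U_\epsilon)\cap\pi_{i^*}(W_\epsilon)$ and $c_1$ proportional to $\beta(c_0,n)$.

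The hard part is exactly this uniform lower bound. Working in a single direction cannot succeed: if $A$ is close to a cylinder over a lower-dimensional set, almost every segment parallel to the axis of the cylinder lies entirely on one side of $\tfrac12$. Summing over the $n$ directions also fails if one only uses the trivial $|\pi_i(E)|\ge|E|$, since that would give the bound $n\bigl(|U_\epsilon|+|W_\epsilon|-(1-\epsilon)^{n-1}\bigr)$, which may be negative because the rate $\eta(\epsilon)\to0$ need not be comparable to $\epsilon$. What rescues the argument is the superadditivity hidden in Loomis--Whitney combined with the Lebesgue-density fact that $|U_\epsilon|+|W_\epsilon|$ is close to the full measure $(1-\epsilon)^n$: the strictly positive bound comes from the concavity gap $\beta>0$, not merely from $|U_\epsilon|+|W_\epsilon|$ being close to $1$.
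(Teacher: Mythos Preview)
Your argument is correct and follows a genuinely different route from the paper. The paper never restricts to coordinate directions: it first selects a vector $z\in Q$ by the averaging identity $\int_Q\int_Q\uno_{A_1}(x)\uno_{\tilde A_2^c}(x+z)\,dx\,dz>c_0^2/4$ (after shrinking $A$ slightly away from $\partial Q$ and extending periodically to handle boundary effects), then slices by a hyperplane $H_a\perp z$, finds $\gtrsim c_0^2\epsilon^{1-n}$ many $(n{-}1)$-dimensional $\epsilon$-cubes in $H_a$ where a density-point condition for $A$ holds, and finally slides each resulting $n$-dimensional cube along the direction $z$ until it is exactly half filled; the cubes produced have faces parallel and perpendicular to $z$ and are in general \emph{not} axis-parallel. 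Your approach replaces this machinery by the Loomis--Whitney inequality combined with the strict-concavity gap $c_0^{(n-1)/n}+(1-c_0)^{(n-1)/n}>1$: this forces some \emph{coordinate} direction $e_{i^*}$ to have a set of fibers, of $(n{-}1)$-measure bounded below by a constant $\beta(c_0,n)$, meeting both $\{f_\epsilon>3/4\}$ and $\{f_\epsilon<1/4\}$, after which the intermediate value theorem and a Vitali-type extraction finish. Your proof is more self-contained (no periodic extension, no oblique cubes, only the one-line Lebesgue-differentiation input) and delivers the bonus that $\mathcal G_\epsilon$ consists of axis-parallel cubes, stronger than the lemma requires and potentially relevant to the anisotropic quantity $[f]_\epsilon$ of \cite{bbm}. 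The trade-off is in the constants: the paper obtains $c_1=c_0^2/(8\sqrt n)$, whereas your $c_1$ is of order $\beta(c_0,n)\,2^{-n}$ after the disjoint-extraction step.
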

\begin{proof} 
First we choose $\epsilon_*=\epsilon_*(c_0,n)\in (0,1/2)$ such that the sets $A_1:=A\cap (\epsilon_*,1-\epsilon_*)^n\subset A$ and
$A_2:=A\cup \left[(0,1)^n\setminus (\epsilon_*,1-\epsilon_*)^n\right]\supset A$ satisfy
\begin{equation}\label{eq:3.5bis}
\frac{c_0}{2}<|A_1|,\qquad |A_2|<1-  \frac{c_0}{2}.
\end{equation}

We now extend the set $A_2$ by periodicity:
$$
\tilde A_2:=\bigcup_{h \in \mathbb Z^n} A_2+h, \qquad \tilde A_2^c=\R^n\setminus \tilde A_2.
$$
Then \eqref{eq:3.5bis} implies 
$$
\int_{Q}\int_{Q}\uno_{A_1}(x)\uno_{\tilde A_2^c}(x+z)\,dx\,dz=|A_1|(1-|A_2|)>\frac{c_0^2}{4}.
$$
Hence, we can find a nonzero vector $z\in Q$ satisfying
$$
|A_1\cap (\tilde A_2^c-z)|>\frac{c_0^2}{4}.
$$
Set now $e:=z/|z|$, $H_a:=z^\perp+ae$, $\hat A:=A_1\cap (\tilde A_2^c-z)$, and
$$
A_\delta:=\left\{x\in \hat A:\ \frac{|Q_r(y)\cap A_1|}{|Q_r(y)|}>\frac{1}{2}\quad \forall \,y\in Q_r(x),\,\,r\in (0,\delta)\right\}.
$$
Since $A_\delta$ monotonically converge as $\delta\downarrow 0$ to a set containing the set
of points of density 1 of $\hat A$, it follows that $|A_\delta|>c_0^2/4$ for $\delta$ small enough. Hence, because
$$
|A_\delta|\leq 2\int_{-\sqrt{n}/2}^{\sqrt{n}/2}\Haus{n-1}(A_\delta\cap H_a) \,da,
$$
we can find $a\in (-\sqrt{n}/2,\sqrt{n}/2)$ satisfying
\begin{equation}\label{eq:3.8}
\Haus{n-1}(A_\delta\cap H_a)>\frac{c_0^2}{8\sqrt{n}}.
\end{equation}

\begin{figure}[h]
\includegraphics[scale=0.4]{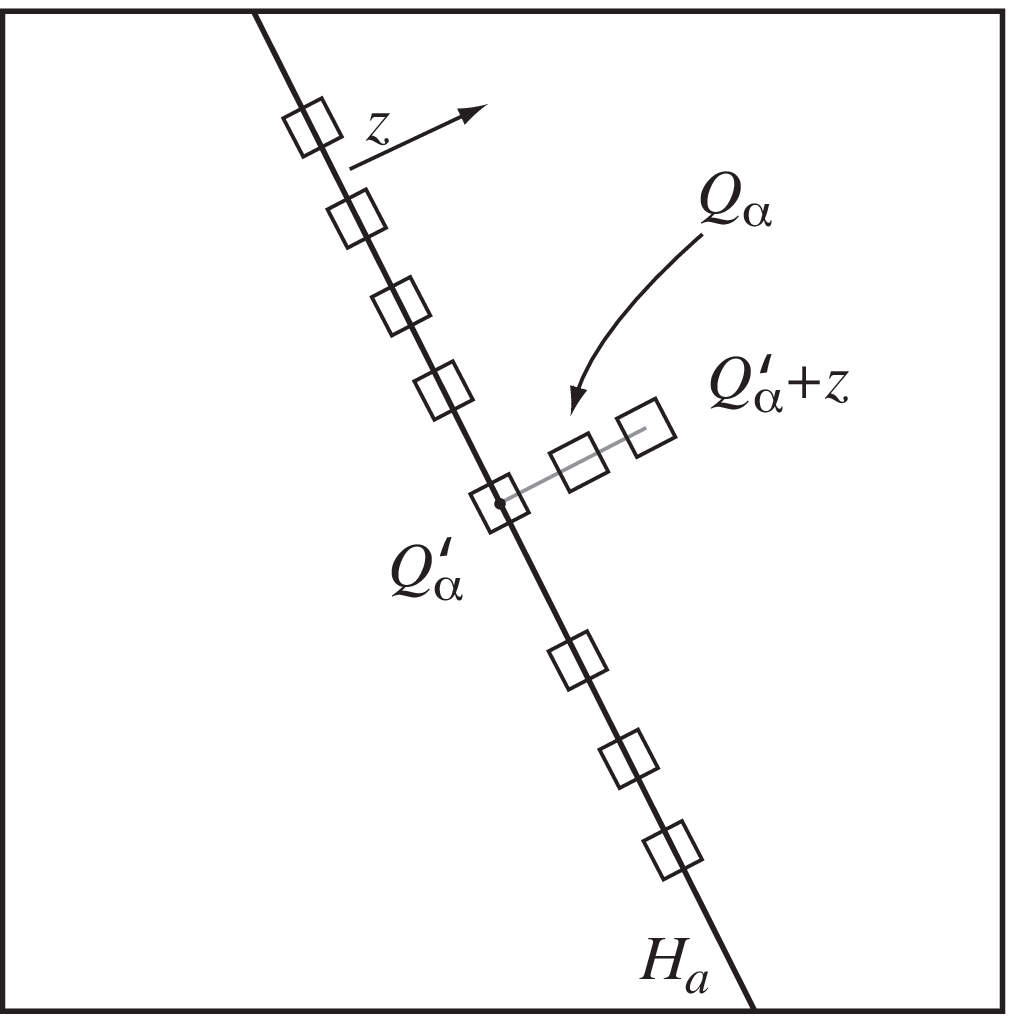}
\caption{\label{fig2}}
\end{figure}

For $\epsilon\leq\delta$, let us consider a canonical division $\{R_\alpha\}_{\alpha\in\Z^{n-1}}$
of $H_a$ in $\epsilon$-cubes of dimension $n-1$, and select those cubes $R_\alpha$ that satisfy
$\Haus{n-1}(A_\delta\cap R_\alpha)>\Haus{n-1}(R_\alpha)/2$, to build a family $\mathcal R_\epsilon$. Since  
$$
\bigcup_{R_\alpha\in \mathcal R_\epsilon}R_\alpha\to A_\delta\cap H_a \qquad\text{in $\Haus{n-1}$-measure
as $\epsilon\to 0^+$}, 
$$
we obtain from \eqref{eq:3.8}
\begin{equation}\label{eq:3.8bis}
\limsup_{\epsilon\rightarrow 0}\epsilon^{n-1}\#\,\mathcal R_\epsilon >\frac{c_0^2}{8\sqrt{n}}.
\end{equation}
Out of $\mathcal R_\epsilon$ we can build a disjoint collection $\mathcal G_\epsilon'$ of $\epsilon$-cubes $Q_\alpha'$ centered
at points $x_\alpha\in H_a$ with faces either orthogonal or parallel to $z$, such that
\begin{equation}\label{eq:3.9}
|Q_\alpha'\cap \hat A|>\frac 1 2|Q_\alpha'|,
\end{equation}
\begin{equation}\label{eq:3.10}
\#\,\mathcal G_\epsilon'>\frac{c_0^2}{8\sqrt{n}}\epsilon^{-n+1}.
\end{equation}
Indeed, \eqref{eq:3.9} follows from the definition of $A_\delta$, while \eqref{eq:3.10} follows by \eqref{eq:3.8bis}.
It follows from \eqref{eq:3.9} and the definition of $\hat A$ that
$$
|Q_\alpha'\cap A|\geq |Q_\alpha'\cap A_1|>\frac{1}{2}|Q_\alpha'|\quad\text{and}\quad 
|(Q_\alpha'+z)\cap A^c|\geq |(Q_\alpha'+z)\cap \tilde A_2^c|>\frac{1}{2}|Q_\alpha'|. 
$$
Since $A_1$ does not intersect $(0,1)^n\setminus (\epsilon_*,1-\epsilon_*)^n$, if $\epsilon<\epsilon^*/\sqrt{n}$
we obtain that $Q_\alpha'\subset (0,1)^n$.
Analogously, since $A_2$ contains 
$(0,1)^n\setminus (\epsilon_*,1-\epsilon_*)^n$, if $\epsilon<\epsilon^*/\sqrt{n}$ we obtain that
 $Q_\alpha'+z\cap \partial Q=\emptyset$, which implies that there exists a vector $h$ in $\mathbb Z^n$ such that $Q_\alpha'+z+h \subset (0,1)^n$ (to be precise, $h$ is of the form $-\gamma_1 e_1+\ldots-\gamma_n e_n$ with $\gamma_i \in \{0,1\}$).

Hence, by a continuity argument there exists $t_\alpha\in (0,1)$ such that, setting $Q_\alpha:=Q_\alpha'+t_\alpha (z+h)$, one has
$Q_\alpha\subset (0,1)^n$ and
$|Q_\alpha\cap A|=|Q_\alpha|/2$ (see Figure~\ref{fig2}, that corresponds to the case $h=0$). 
Then we can define $\mathcal G_\epsilon$ as the collection of the cubes $Q_\alpha$, which is
disjoint by construction (since their projections on $H_a$ are disjoint). 
\end{proof}

We can now prove that $\liminf_\epsilon\I_\epsilon(f)\geq 1/2$. Set $c_0=2^{-n-1}$, let $c_1$ be given by Lemma~\ref{lem2}, and set $K:=1/c_1$. 
If $\delta=\delta_0(A,K)$ is given by Lemma~\ref{lem1}, we can apply Lemma~\ref{lem1} to obtain a finite
disjoint family ${\mathcal U}_\delta$ of $\delta$-cubes with $\#\,{\mathcal U}_\delta>c_1^{-1}\delta^{1-n}$ and
$$
c_0<\frac{|Q'\cap A|}{|Q'|}< 1-c_0\qquad\text{for all $Q'\in\mathcal U_\delta$.}
$$
Since $\mathcal U_\delta$ is finite, for $0<\epsilon\ll\delta$ and all $Q'\in {\mathcal U}_\delta$ 
we can apply Lemma~\ref{lem2} to a rescaled copy by a factor $\delta^{-1}$ of $Q'$ and $A\cap Q'$ 
to obtain a disjoint family
${\mathcal G}_{\epsilon}(Q')$ of $\epsilon$-cubes contained in $Q'$ and satisfying
\begin{equation}\label{eq:3.11}
\frac{|V\cap A|}{|V|}=\frac 12\qquad\forall\, V\in{\mathcal G}_{\epsilon}(Q'),
\end{equation}
\begin{equation}\label{eq:3.12}
\#\,{\mathcal G}_{\epsilon}(Q')>c_1\biggl(\frac{\delta}{\epsilon}\biggr)^{n-1}.
\end{equation}
Now, by construction, the family 
$$
{\mathcal G}_{\epsilon}:=\bigcup_{Q'\in\mathcal U_\delta}{\mathcal G}_{\epsilon}(Q')
$$
of $\epsilon$-cubes is disjoint (taking into account condition (c) of Lemma~\ref{lem1}) 
and $|V\cap A|/|V|=1/2$ for each $V$ in the family. In addition, its cardinality can be estimated
from below as follows: 
$$
\#\,\mathcal G_\epsilon\geq c_1\biggl(\frac{\delta}{\epsilon}\biggr)^{n-1}\#\,\mathcal U_\delta>
c_1\biggl(\frac{\delta}{\epsilon}\biggr)^{n-1} \frac{1}{c_1}\delta ^{1-n}=\epsilon^{1-n}.
$$
Extracting from $\mathcal G_\epsilon$ a subfamily $\mathcal F_\epsilon$ with $\#\,\mathcal F_\epsilon= [\epsilon^{1-n}]$
we get
$$
\I_\epsilon(f)\geq \epsilon^{1-n}\sum_{V\in \mathcal F_\epsilon}\mean{V}\mean{V}|f(x)-f(y)|\,dx\,dy=
2\epsilon^{n-1}\sum_{V\in \mathcal F_\epsilon}\frac{|V\cap A|\cdot |V\setminus A|}{|V|^2}=
\frac{1}{2}\epsilon^{n-1}[\epsilon^{1-n}].
$$
By taking the limit as $\epsilon\to 0^+$ the conclusion is achieved.

\subsection{Lower bound: the rectifiable case}\label{sec:lbound}

The heuristic idea of the proof is to choose cubes well adapted to the local geometry of $\partial A$, as in Figure \ref{fig3} below.
Although it is easy to make this argument rigorous if $\partial A$ is smooth,
when $A$ has merely finite perimeter the argument becomes much less obvious.
Still,  the rectifiability of $\partial^* A$ and a suitable localization/blow-up argument allow us to prove the result in this general setting.

\begin{figure}[h]
\includegraphics[scale=0.3]{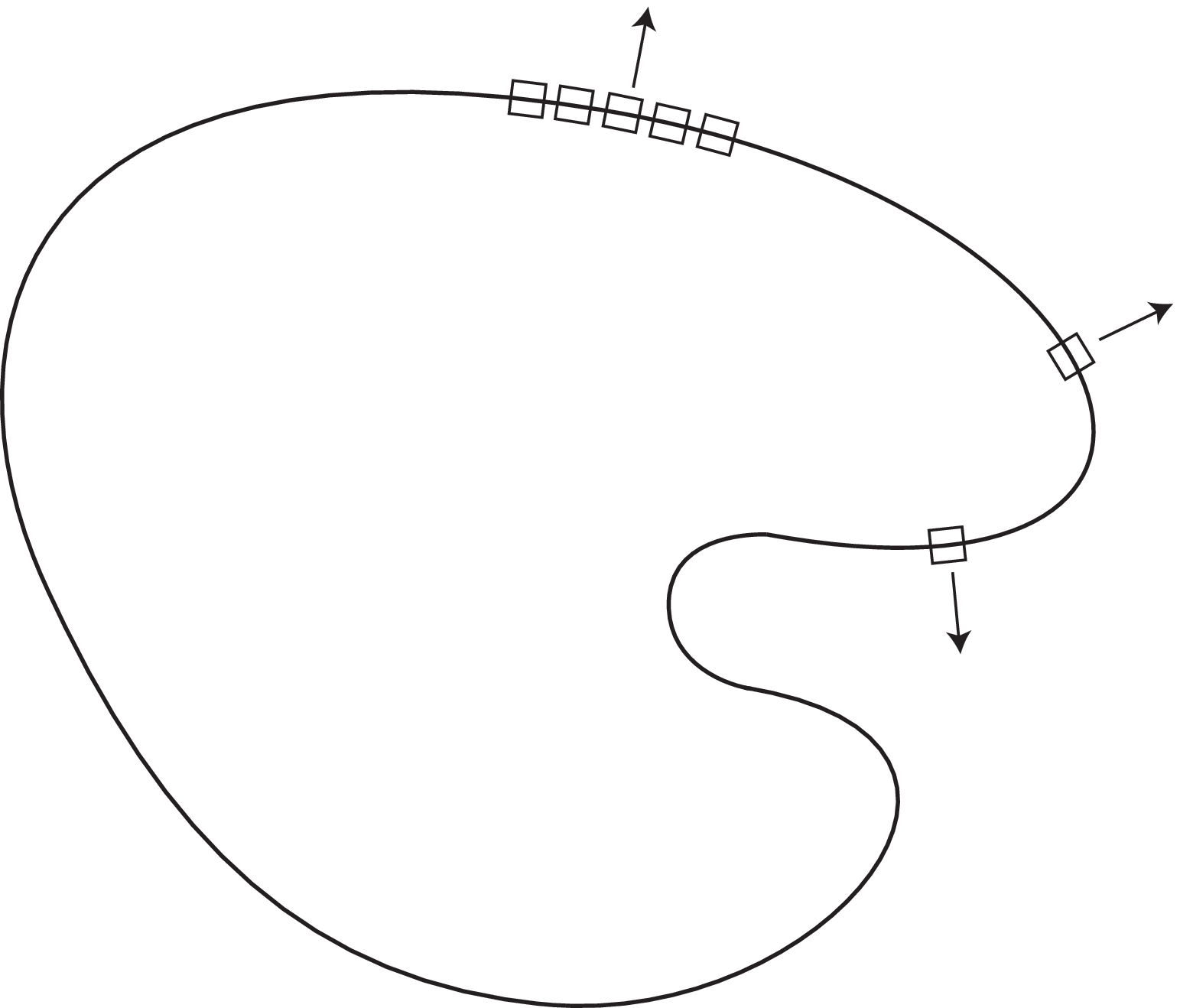}
\caption{\label{fig3}}
\end{figure}

Let $A\subset\R^n$ be measurable and $\Omega\subset\R^n$ open.
We localize $\I_\epsilon(\uno_A)$ to $\Omega$ 
and, at the same time, we impose a scale-invariant bound on the cardinality of the families by defining
$$
\J_\epsilon(A,\Omega):=\epsilon^{n-1}\sup_{{\mathcal F}_\epsilon}\sum_{Q'\in\mathcal F_\epsilon}
\mean{Q'}\Bigl|\uno_A(x)-\mean{Q'}\uno_A\Bigr|\, dx,
$$
where the supremum runs, this time, among all collections of disjoint families of $\epsilon$-cubes contained in $\Omega$,
with arbitrary orientation and cardinality not exceeding $\Per(A,\Omega)\epsilon^{1-n}$.

Notice that $\J_\epsilon$ has a nice scaling property, namely
\begin{equation}\label{eq:scalingJ}
\J_\epsilon(A/r,\Omega)=r^{1-n}\J_{r\epsilon}(A,r\Omega)\qquad\forall\, r>0.
\end{equation}
In addition, the additivity of $\Per(A,\cdot)$ shows that $\J_\epsilon$ is superadditive, namely
\begin{equation}\label{eq:superadd}
\J_\epsilon(A,\Omega_1\cup\Omega_2)\geq \J_\epsilon(A,\Omega_1)+\J_\epsilon(A,\Omega_2)
\qquad\text{whenever $\Omega_1\cap\Omega_2=\emptyset$.}
\end{equation}
Then, the lower bound 
\begin{equation}\label{eq:june6}
2\liminf_\epsilon\I_\epsilon(\uno_A)\geq\min\{1,\Per(A)\}
\end{equation}
 is a direct consequence
of Theorem~\ref{thm:limper} below, choosing $\Omega=\R^n$. Indeed, since $\Per(A)\leq 1$ implies $\J_\epsilon(A,\R^n)\leq\I_\epsilon(\uno_A)$
we obtain \eqref{eq:june6} when $\Per(A)\leq 1$. If $\Per(A)>1$, let $k=[\Per(A)]\geq 1$ be
its integer part and split any disjoint family $\mathcal F_\epsilon$ of $\epsilon$-cubes with maximal cardinality 
which enters in the definition of $\J_\epsilon(A,\R^n)$
into $k$ subfamilies with cardinality $[\epsilon^{1-n}]$ and a remainder subfamily of cardinality not exceeding 
$\Per(A)\epsilon^{1-n}-k[\epsilon^{1-n}]\leq \Per(A)\epsilon^{1-n}-k(\epsilon^{1-n}-1)$.
Since $\mathcal F_\epsilon$ is arbitrary, recalling \eqref{eq:trivial} we see that
$$
\J_\epsilon(A,\R^n)\leq k\,\I_\epsilon(\uno_A)+\frac 12\bigl(\Per(A)-k\bigr)+k\epsilon^{n-1}.
$$
Applying once more Theorem~\ref{thm:limper} with $\Omega=\R^n$ yields
$$
2\liminf_{\epsilon\to 0}\I_\epsilon(\uno_A)\geq \frac{\Per(A)}{k}-\frac{(\Per(A)-k)}{k}=
 1=\min\{1,\Per(A)\}
$$
since $\Per(A)>1$.

\begin{theorem} \label{thm:limper} 
For any measurable set $A\subset\R^n$ with finite perimeter in $\Omega$ one has
$$
\lim_{\epsilon\to 0^+}\J_\epsilon(A,\Omega)= \frac{1}{2}\Per(A,\Omega).
$$
\end{theorem}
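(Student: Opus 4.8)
The plan is to establish the two inequalities $\mathsf J_\epsilon(A,\Omega)\le\tfrac12\Per(A,\Omega)$ (for every $\epsilon>0$) and $\liminf_{\epsilon\to0}\mathsf J_\epsilon(A,\Omega)\ge\tfrac12\Per(A,\Omega)$ separately. The first is the easy half, obtained exactly as in \S\ref{sec:upperbound}: for any disjoint family $\{Q'\}$ of $\epsilon$-cubes contained in $\Omega$, rescaling \eqref{eq:Hadwiger} to each $Q'$ gives $\textmean{Q'}\bigl|\uno_A-\textmean{Q'}\uno_A\bigr|\le\tfrac12\epsilon^{1-n}\Per(A,Q')$, so summing over the disjoint cubes and using that $\Per(A,\cdot)$ is a measure yields $\epsilon^{n-1}\sum_{Q'}\textmean{Q'}\bigl|\uno_A-\textmean{Q'}\uno_A\bigr|\le\tfrac12\Per(A,\Omega)$; taking the supremum over admissible families proves the bound (the cardinality constraint playing no role here). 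The lower bound is trivial when $\Per(A,\Omega)=0$, so from now on assume $0<\Per(A,\Omega)<\infty$.

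For the lower bound I would construct, for each small $\epsilon$, a competing disjoint family of $\epsilon$-cubes (of arbitrary orientation, as allowed) each split \emph{exactly in half} by $A$ — hence each contributing the maximal amount $\tfrac12$ permitted by \eqref{eq:trivial} — with cardinality essentially $\Per(A,\Omega)\epsilon^{1-n}$; the cubes are adapted to the local geometry of $\partial^*A$, as in Figure~\ref{fig3}. The preliminary step is a blow-up/regularization of the boundary. Fix $\tau>0$. Applying Egorov's and Lusin's theorems to the finite Radon measure $\Haus{n-1}\res(\partial^*A\cap\Omega)=|D\uno_A|\res\Omega$, and using Theorem~\ref{thm:dgfe} (the $C^1$-rectifiability of $\partial^*A$, the continuity of the Gauss map on each $\Gamma_i$, and the fact that $\uno_{(A-x)/r}\to\uno_{H_A(x)}$ in $L^1_{\rm loc}$ at $\Haus{n-1}$-a.e.\ point), one finds finitely many pairwise disjoint compact sets $\mathsf C_1,\dots,\mathsf C_m\subset\partial^*A\cap\Omega$, unit vectors $\nu_1,\dots,\nu_m$, hyperplanes $P_k:=\nu_k^\perp$, and a radius $r_0>0$ with: (a) $\Haus{n-1}\bigl(\partial^*A\cap\Omega\setminus\bigcup_k\mathsf C_k\bigr)<\tau$; (b) each $\mathsf C_k$ sits on a single patch of one $\Gamma_i$ that is a $C^1$ graph over $P_k$ of slope $<\tau$, and $|\nu_A(x)-\nu_k|<\tau$ on $\mathsf C_k$; (c) $\bigl|\bigl(A\,\Delta\,(x+H_A(x))\bigr)\cap B_r(x)\bigr|<\tau r^n$ for all $x\in\mathsf C_k$ and $r\le r_0$; (d) $\mathsf C_k$ has distance $>r_0$ from $\partial\Omega$ and from $\mathsf C_l$, $l\ne k$. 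Combining (b) and (c) (the symmetric difference of two half-spaces whose normals differ by $O(\tau)$ is, in a ball, a wedge of volume $O(\tau r^n)$), one gets that $A$ is $C_n\tau$-close in $L^1(B_r(x))$ to the \emph{fixed} half-space $\{z:(z-x)\cdot\nu_k\le0\}$ for every $x\in\mathsf C_k$, $r\le r_0$.

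Given this decomposition and $\epsilon$ with $10\sqrt n\,\epsilon<r_0$, build $\mathcal F_\epsilon$ piece by piece. For each $k$, tile $P_k$ by the canonical grid of $(n-1)$-dimensional $\epsilon$-cubes and let $\mathcal A_k$ be the cubes $Q'$ with $\Haus{n-1}(Q'\cap\pi_k(\mathsf C_k))>0$, where $\pi_k$ is the orthogonal projection onto $P_k$. For $Q'\in\mathcal A_k$ pick $x_{Q'}\in\mathsf C_k$ with $\pi_k(x_{Q'})\in Q'$, set $s_{Q'}:=x_{Q'}\cdot\nu_k$, and slide an $\epsilon$-cube of cross-section $Q'$ along $\nu_k$ over the levels $[s_{Q'}-2\epsilon,s_{Q'}+2\epsilon]$: the position lying entirely below the level $s_{Q'}$ is then more than half filled by $A$ and the one lying entirely above it is less than half filled (by the $L^1$-closeness to the half-space, for $\tau$ small), so by the intermediate value theorem some intermediate position gives an $\epsilon$-cube $V_{Q'}$ with $|V_{Q'}\cap A|=\tfrac12|V_{Q'}|$; moreover $V_{Q'}\subset B_{5\sqrt n\,\epsilon}(x_{Q'})\subset\Omega$. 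The cubes $V_{Q'}$ are pairwise disjoint (disjoint columns within a piece; (d) separates different pieces), and each is split exactly in half by $A$. Since $\pi_k|_{\mathsf C_k}$ is injective with Jacobian in $(1-\tau,1)$ by (b), while $\bigcup_{\mathcal A_k}Q'$ covers $\pi_k(\mathsf C_k)$ up to an $\Haus{n-1}$-null set and decreases to this compact set as $\epsilon\downarrow0$, one has $(1-\tau)\Haus{n-1}(\mathsf C_k)\,\epsilon^{1-n}\le\#\mathcal A_k\le(1+\tau)\Haus{n-1}(\mathsf C_k)\,\epsilon^{1-n}$ for $\epsilon$ small. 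Discarding cubes, if needed, so that $\#\mathcal F_\epsilon\le\Per(A,\Omega)\epsilon^{1-n}$, and recalling that a cube split exactly in half contributes exactly $\tfrac12$, we obtain
$$\mathsf J_\epsilon(A,\Omega)\ \ge\ \tfrac12\,\epsilon^{n-1}\Bigl(\min\Bigl\{(1-\tau)\textstyle\sum_k\Haus{n-1}(\mathsf C_k)\,\epsilon^{1-n},\ \Per(A,\Omega)\,\epsilon^{1-n}\Bigr\}-1\Bigr),$$
hence $\liminf_{\epsilon\to0}\mathsf J_\epsilon(A,\Omega)\ge\tfrac12(1-\tau)\bigl(\Per(A,\Omega)-\tau\bigr)$ by (a); letting $\tau\downarrow0$ gives the lower bound, and the theorem follows.

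The point I expect to be most delicate is the \emph{uniformity in $\epsilon$}: because the statement involves $\liminf_{\epsilon\to0}$, the competing family must be admissible for \emph{every} small $\epsilon$, so the local flatness of $A$ near $\partial^*A$ has to be available at scales \emph{comparable to $\epsilon$}, not merely along some sequence of scales. This is exactly why the blow-up convergence of Theorem~\ref{thm:dgfe}(iii) must be made uniform on the compact pieces $\mathsf C_k$ via Egorov, and why $C^1$-rectifiability (rather than mere rectifiability) is convenient. A second delicate point is preserving the sharp constant $\tfrac12$: one must tile the approximate tangent hyperplanes $P_k$ by grids — covering $\partial^*A$ by balls instead would lose a dimensional factor such as $5^{n-1}$ in the count — and $\#\mathcal A_k$ must be controlled from above through $\bigl|\{\,\mathrm{dist}(\cdot,\pi_k(\mathsf C_k))<\sqrt n\,\epsilon\,\}\bigr|\downarrow\Haus{n-1}(\pi_k(\mathsf C_k))$.
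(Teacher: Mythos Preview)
Your argument is correct, but it follows a genuinely different route from the paper's. The paper does \emph{not} construct a single global family of $\epsilon$-cubes adapted to $\partial^*A$; instead it works pointwise and then glues via covering theorems. Concretely, the paper (i) proves a ``modulus of continuity'' estimate \eqref{eq:modulus_continuity} for $E\mapsto\J_\epsilon(E,\Omega)$ in terms of $\Haus{n-1}\bigl((\partial^*F\Delta\partial^*E)\cap\Omega\bigr)$, (ii) uses it together with Lemma~\ref{lem:c1blow} (the case of $C^1$ subgraphs) and the rectifiability of $\partial^*A$ to obtain the density lower bound $\liminf_{r}\J_-(A,B_r(x))/(\omega_{n-1}r^{n-1})\geq\tfrac12$ at $|D\uno_A|$-a.e.\ $x$, and (iii) exploits the superadditivity \eqref{eq:superadd-bis} of $\J_-(A,\cdot)$ together with Besicovitch and Besicovitch--Vitali covering theorems to pass from the density lower bound to the global inequality $\J_-(A,\Omega)\geq\tfrac12\Haus{n-1}(\partial^*A\cap\Omega)$.

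Your approach, by contrast, is a direct construction: Egorov/Lusin uniformization on compact pieces $\mathsf C_k\subset\partial^*A$, tiling the approximate tangent hyperplanes by $(n-1)$-cubes, and the sliding/IVT trick to produce $\epsilon$-cubes exactly bisected by $A$ --- essentially the mechanism of Lemma~\ref{lem2} transplanted to the rectifiable setting. This is more elementary in that it avoids the somewhat delicate point that the implication \eqref{eq:dlb} extends to merely superadditive set functions, and it makes the picture of Figure~\ref{fig3} fully rigorous. The price is the Egorov step to make the $L^1_{\rm loc}$-convergence in Theorem~\ref{thm:dgfe}(iii) uniform (exactly the issue you flag); the paper sidesteps uniformity by working one point at a time and letting the covering theorem do the globalization. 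Your observation that tiling the hyperplanes (rather than covering by balls) is essential to retain the sharp constant $\tfrac12$ is correct; the paper achieves the same effect via the scaling identity \eqref{eq:scalingJ-bis} and Lemma~\ref{lem:c1blow}, where the tiling is done once in the flat model.
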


The proof of the upper bound $\limsup_\epsilon\J_\epsilon(A,\Omega)\leq \Per(A,\Omega)/2$ can be
obtained exactly as in \S\ref{sec:upperbound}, so we focus on the lower bound. To this aim, 
it will be convenient to introduce the function
$$
\J_-(A,\Omega):=\liminf_{\epsilon\to 0^+}\J_\epsilon(A,\Omega).
$$
Because of \eqref{eq:scalingJ} we get
\begin{equation}\label{eq:scalingJ-bis}
\J_-(A/r,\Omega)=r^{1-n}\J_-(A,r\Omega)\qquad\forall r>0.
\end{equation}
In addition, the superadditivity of $\J_\epsilon(A,\cdot)$ and of the $\liminf$ give
\begin{equation}\label{eq:superadd-bis}
\J_-(A,\Omega_1\cup\Omega_2)\geq \J_-(A,\Omega_1)+\J_-(A,\Omega_2)
\qquad\text{whenever $\Omega_1\cap\Omega_2=\emptyset$.}
\end{equation}

In the first lemma we consider (local) subgraphs of $C^1$ functions.

\begin{lemma} \label{lem:c1blow} Let $E$ be the subgraph of a $C^1$ function in a neighbourhod of $0$. Then
$$
\liminf_{r\to 0^+}\J_-(E/r,B_1)\geq \frac 12\omega_{n-1}.
$$
\end{lemma}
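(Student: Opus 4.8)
The plan is to reduce the statement to the model case of a hyperplane through the origin by using the $C^1$ regularity to flatten $\partial E$ near $0$, and then to construct, at scale $r$, an explicit efficient family of $\epsilon$-cubes adapted to that hyperplane. More precisely, after a rotation we may assume $E=\{x_n<g(x')\}$ near $0$ with $g\in C^1$, $g(0)=0$, $\nabla g(0)=0$. The blow-up $E/r$ then converges in $L^1_{\rm loc}$, as $r\to 0^+$, to the half-space $H=\{x_n<0\}$, and the convergence is in the strong $C^1$ sense: on any fixed ball $B_R$, $\partial(E/r)$ is the graph of $g_r(x')=g(rx')/r$, with $g_r\to 0$ and $\nabla g_r\to 0$ uniformly on compact sets. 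So the first step is to prove the cleaner statement
\[
\J_-(H,B_1)\geq\tfrac12\omega_{n-1}\qquad\text{for }H=\{x_n<0\},
\]
and then argue that the $C^1$ flattening lets us transfer this lower bound, up to an error vanishing as $r\to0$, to $\J_-(E/r,B_1)$, which yields the $\liminf$ statement.

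For the half-space case, the idea (cf.\ Figure~\ref{fig3}) is: given $\epsilon>0$, tile a large portion of the hyperplane $\{x_n=0\}\cap B_1$ by $(n-1)$-dimensional $\epsilon$-cubes $R_\alpha$, and over each $R_\alpha$ place the $n$-dimensional $\epsilon$-cube $Q_\alpha$ centered on $\{x_n=0\}$ with one pair of faces parallel to the hyperplane. Each such $Q_\alpha$ is split exactly in half by $H$, so
\[
\mean{Q_\alpha}\Bigl|\uno_H(x)-\mean{Q_\alpha}\uno_H\Bigr|\,dx=2\cdot\tfrac12\cdot\tfrac12=\tfrac12 .
\]
The number of such disjoint cubes inside $B_1$ is $\#\{\alpha\}=\omega_{n-1}\epsilon^{1-n}+o(\epsilon^{1-n})$ (the volume of the $(n-1)$-ball $B_1\cap\{x_n=0\}$ divided by $\epsilon^{n-1}$, up to boundary corrections), and this is admissible in the definition of $\J_\epsilon(H,B_1)$ because $\Per(H,B_1)=\omega_{n-1}$, so the cardinality constraint $\#\mathcal F_\epsilon\le\Per(H,B_1)\epsilon^{1-n}=\omega_{n-1}\epsilon^{1-n}$ is (asymptotically) exactly matched. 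Hence $\J_\epsilon(H,B_1)\geq\epsilon^{n-1}\cdot\tfrac12\cdot(\omega_{n-1}\epsilon^{1-n}+o(\epsilon^{1-n}))\to\tfrac12\omega_{n-1}$, giving $\J_-(H,B_1)\geq\tfrac12\omega_{n-1}$.

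To pass from $H$ to $E/r$, fix a small $\theta>0$ and choose $R<1$ so that, for $r$ small, $\partial(E/r)\cap B_R$ is the graph of a function $g_r$ with $\|g_r\|_\infty<\theta$ and $\|\nabla g_r\|_\infty<\theta$ on $B_R'$. Take the same family of $\epsilon$-cubes $\{Q_\alpha\}$ as above, now sitting inside $B_R$ and centered on $\{x_n=0\}$; by the uniform $C^1$ smallness, each $Q_\alpha$ meeting the region where the graph lies has $|Q_\alpha\cap(E/r)|/|Q_\alpha|$ within $O(\theta)$ of $\tfrac12$, so
\[
\mean{Q_\alpha}\Bigl|\uno_{E/r}-\mean{Q_\alpha}\uno_{E/r}\Bigr|\,dx\geq\tfrac12-C\theta .
\]
There is a subtlety with the cardinality bound: the admissible number of cubes in $\J_\epsilon(E/r,B_1)$ is $\Per(E/r,B_1)\epsilon^{1-n}$, and $\Per(E/r,B_1)\to\Per(H,B_1)=\omega_{n-1}$ as $r\to0$ (lower semicontinuity gives $\ge$, and the $C^1$ convergence gives $\le$ up to $O(\theta)$ on the part of $B_1$ outside $B_R$, which is controlled by shrinking $R$ last), so the count $\#\{\alpha\}\approx(\text{vol of }B_R'\text{ times }\epsilon^{1-n})$ can be arranged $\le\Per(E/r,B_1)\epsilon^{1-n}$ for $r,\epsilon$ small. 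Putting these together, $\J_-(E/r,B_1)\geq(\tfrac12-C\theta)\cdot\omega_{n-1}\cdot\frac{|B_R'|}{\omega_{n-1}}$ — and here lies the real difficulty: to recover the \emph{full} $\tfrac12\omega_{n-1}$ one cannot simply restrict to $B_R$, since $|B_R'|<\omega_{n-1}$. The fix is to use the scaling and superadditivity relations \eqref{eq:scalingJ-bis}--\eqref{eq:superadd-bis}: cover $B_1\cap\{x_n=0\}$ (up to a negligible set) by finitely many disjoint small balls, flatten $\partial E$ in each, apply the local estimate in each, and sum; letting the balls shrink and $\theta\to0$ exhausts the mass $\omega_{n-1}$. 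So the main obstacle is this bookkeeping — simultaneously matching the (variable, $r$-dependent) cardinality constraint $\Per(E/r,B_1)\epsilon^{1-n}$ and covering essentially all of the flat blow-up boundary while keeping the per-cube oscillation close to $\tfrac12$ — rather than any single hard estimate; each ingredient is routine, but they must be choreographed in the right order ($\theta$ small, then $r$ small, then the covering refined, then $\epsilon\to0$).
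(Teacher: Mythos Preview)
Your overall strategy---reduce to a half-space via $C^1$ flattening and then use explicit $\epsilon$-cubes adapted to the hyperplane---is exactly the paper's approach; the paper just phrases the flattening as ``$E/r$ is bi-Lipschitz equivalent to a half-space in $B_1$ with bi-Lipschitz constants converging to $1$''. However, your write-up has one genuine gap and one manufactured difficulty.

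\textbf{The gap.} You center the cubes $Q_\alpha$ on $\{x_n=0\}$. For fixed $r$, the function $g_r$ has some fixed positive $L^\infty$ norm (bounded by $\theta$, but not by $\epsilon$). Since $\J_-(E/r,B_1)=\liminf_{\epsilon\to 0}\J_\epsilon(E/r,B_1)$, you must let $\epsilon\to 0$ with $r$ fixed; once $\epsilon<2\|g_r\|_\infty$, the graph of $g_r$ lies outside most of your cubes, their volume fractions become $0$ or $1$, and the oscillation term collapses. The fix is simple: center the cubes on the graph, i.e.\ at $(x_\alpha',g_r(x_\alpha'))$. Then only the gradient bound $\|\nabla g_r\|_\infty<\theta$ matters, and the volume fraction is within $C\theta$ of $\tfrac12$ uniformly in $\epsilon$. (This is precisely what the paper's bi-Lipschitz map $\Psi_r(x',x_n)=(x',x_n+g_r(x'))$ accomplishes: it transports the half-space cube centers onto the graph.) The projections onto $\{x_n=0\}$ remain disjoint, so the family is disjoint.

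\textbf{The non-difficulty.} Your ``real difficulty'' with $B_R$ is illusory. Since $g(0)=0$ and $\nabla g(0)=0$, one has $\|g_r\|_{C^1(\overline{B_1'})}\to 0$ as $r\to 0$, i.e.\ the flatness holds on all of $B_1'$, not just on $B_R'$; no restriction to $R<1$ is needed. Even keeping your restriction, your own estimate reads $\liminf_{r\to 0}\J_-(E/r,B_1)\geq(\tfrac12-C\theta)|B_R'|$ for every $R<1$ and every $\theta>0$; now just let $R\uparrow 1$ and $\theta\downarrow 0$. The superadditivity/covering patch you propose is unnecessary (and awkward, since the target is a single ball). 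The cardinality constraint causes no trouble either: $\Per(E/r,B_1)\geq |B_{\sqrt{1-\theta^2}}'|$ by projecting the graph, so the allowed number of cubes is at least $(1-C\theta)\omega_{n-1}\epsilon^{1-n}$, which matches the number you can place.
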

\begin{proof} The proof is elementary, just choosing the canonical division in $\epsilon$-cubes, if
$B_r\cap\partial E$ is contained in a hyperplane for $r>0$ small enough. In the general case we
use the fact that $E/r$ is bi-Lipschitz equivalent to a half-space in $B_1$, with bi-Lipschitz constants
converging to $1$ as $r\downarrow 0$.
\end{proof}

In the second lemma we provide a sort of modulus of continuity for $E\mapsto\J_\epsilon (E,\Omega)$.

\begin{lemma} Let $E,\,F\subset\Omega$ be sets of finite perimeter in $\Omega$. Then 
\begin{equation}\label{eq:modulus_continuity}
\J_\epsilon(F,\Omega)\leq\J_\epsilon(E,\Omega)+\frac 12\epsilon^{n-1}+
\Haus{n-1}\bigl((\partial^*F\Delta\partial^*E)\cap\Omega\bigr)\qquad\forall\,\epsilon>0.
\end{equation}
\end{lemma}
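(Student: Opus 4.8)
The plan is to estimate the difference $\J_\epsilon(F,\Omega)-\J_\epsilon(E,\Omega)$ directly at the level of the sums defining them, by taking a nearly optimal competitor family $\mathcal F_\epsilon$ for $\J_\epsilon(F,\Omega)$ and showing that, up to a controlled error, it is an admissible competitor for $\J_\epsilon(E,\Omega)$ after discarding those cubes that ``see'' the symmetric difference of the reduced boundaries. The starting point is the elementary pointwise bound, valid for any $\epsilon$-cube $Q'$,
\begin{equation}\label{eq:cube_bound_plan}
\mean{Q'}\Bigl|\uno_F(x)-\mean{Q'}\uno_F\Bigr|\,dx\le \mean{Q'}\Bigl|\uno_E(x)-\mean{Q'}\uno_E\Bigr|\,dx+2\frac{|Q'\cap(E\Delta F)|}{|Q'|},
\end{equation}
which follows from $|\,\textmean{Q'}|\uno_F-c|-\textmean{Q'}|\uno_E-c|\,|\le \textmean{Q'}|\uno_F-\uno_E|=|Q'\cap(E\Delta F)|/|Q'|$ applied with $c=\textmean{Q'}\uno_F$ and then optimizing. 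Hence summing over $\mathcal F_\epsilon$ gives $\J_\epsilon(F,\Omega)\le \J_\epsilon(E,\Omega)+2\epsilon^{n-1}\sum_{Q'\in\mathcal F_\epsilon}|Q'\cap(E\Delta F)|/|Q'|$, modulo the subtlety that $\mathcal F_\epsilon$ must remain admissible for $E$ — i.e.\ its cardinality must not exceed $\Per(E,\Omega)\epsilon^{1-n}$ rather than $\Per(F,\Omega)\epsilon^{1-n}$. This cardinality mismatch is exactly what the $\frac12\epsilon^{n-1}$ term absorbs: if the family used for $F$ is too large by at most $\bigl(\Per(F,\Omega)-\Per(E,\Omega)\bigr)\epsilon^{1-n}\le \Haus{n-1}(\partial^*F\Delta\partial^*E)\epsilon^{1-n}$ (using \eqref{eq:repPer} and \eqref{eq:symmbou}, and the fact that $\Haus{n-1}$ on $\partial^*F\cup\partial^*E$ dominates both perimeters' deficits), discard the excess cubes, each of which contributes at most $\frac12\epsilon^{n-1}\cdot\epsilon^{1-n}\cdot(\text{per cube})$... actually each contributes at most $\epsilon^{n-1}\cdot\frac12$ per cube times the number discarded, so the total discarded mass is at most $\frac12\epsilon^{n-1}\cdot\Haus{n-1}(\partial^*F\Delta\partial^*E)\epsilon^{1-n}$ — wait, this must be reconciled, and the clean way is below.

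**More carefully**, I would organize it so that the two error terms arise from two genuinely different mechanisms. First, estimate the ``density'' term: for a disjoint family $\mathcal F_\epsilon$ one has $\sum_{Q'\in\mathcal F_\epsilon}|Q'\cap(E\Delta F)|\le |(E\Delta F)^{(\epsilon)}\cap\Omega|$ where the union of the cubes lies in a neighborhood of $\Omega$; but this is $O(|E\Delta F|)$, which is \emph{not} what we want — the bound must be in terms of $\Haus{n-1}(\partial^*F\Delta\partial^*E)$, not the volume. So the density term cannot simply be summed crudely. The correct route is: split $\mathcal F_\epsilon$ into the cubes $Q'$ that meet $\partial^*F\Delta\partial^*E$ and those that don't. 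For a cube $Q'$ \emph{disjoint} from $\partial^*(E\Delta F)$ (hence, by \eqref{eq:symmbou}, $\Haus{n-1}$-essentially disjoint from $\partial^*F\Delta\partial^*E$), the set $E\Delta F$ has density $0$ or $1$ throughout $Q'$, so either $|Q'\cap(E\Delta F)|=0$ or $=|Q'|$; in the former case \eqref{eq:cube_bound_plan} gives exactly $\le\J_\epsilon(E,\Omega)$'s integrand with no error, and in the latter case $\uno_F=1-\uno_E$ a.e.\ on $Q'$ so the two integrands are literally equal. Thus \emph{only} the cubes meeting $\partial^*F\Delta\partial^*E$ contribute an error, and each such cube contributes at most $2\cdot\frac12=1$ to $\textmean{Q'}|\uno_F-\textmean{Q'}\uno_F|$ minus the $E$-term, i.e.\ at most... here I would use the cruder bound that each such cube contributes at most $1$ (from $\le\frac12$ for the $F$-integrand) times $\epsilon^{n-1}$, and bound the \emph{number} of disjoint $\epsilon$-cubes meeting $\partial^*F\Delta\partial^*E$ by $\Haus{n-1}(\partial^*F\Delta\partial^*E)\epsilon^{1-n}$ plus a lower-order correction — but disjoint $\epsilon$-cubes each of which meets a fixed $(n-1)$-rectifiable set of finite measure number at most... this is where I must be careful.

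**The main obstacle**, and the place I expect the proof to concentrate its work, is precisely this covering/counting estimate: bounding the number of disjoint $\epsilon$-cubes that intersect the rectifiable set $S:=\partial^*F\Delta\partial^*E$ by roughly $\Haus{n-1}(S)\epsilon^{1-n}$. A naive bound gives $C_n\Haus{n-1}(S)\epsilon^{1-n}$ with a dimensional constant, which is \emph{not} good enough since the statement has coefficient exactly $1$ in front of $\Haus{n-1}(S)$. The resolution must use the rectifiability from Theorem~\ref{thm:dgfe}(ii): cover $\Haus{n-1}$-almost all of $S$ by $C^1$ hypersurfaces $\Gamma_i$, and on each $\Gamma_i$, at $\Haus{n-1}$-a.e.\ point the surface is approximately flat at small scales, so a disjoint family of $\epsilon$-cubes meeting a piece of $\Gamma_i$ of small diameter projects, for $\epsilon$ small, almost injectively onto an $(n-1)$-dimensional region of area $\le(1+o(1))\Haus{n-1}(\Gamma_i\cap\text{piece})$; since each cube's projection has $(n-1)$-area at most $\side{Q'}^{n-1}=\epsilon^{n-1}$ — and here I'd want to orient cleverly, but cubes have \emph{arbitrary} orientation so I can only say the projection onto \emph{any} fixed hyperplane has area $\le C_n\epsilon^{n-1}$... no — the projection of a cube onto the hyperplane tangent to $\Gamma_i$ at a nearby point has area at most $\epsilon^{n-1}$ only if a face is parallel, which we cannot assume. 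The honest statement is that the number of disjoint $\epsilon$-cubes meeting $S$ is at most $\frac12\Haus{n-1}(S)\epsilon^{1-n}+o(\epsilon^{1-n})$ is false in general; what \emph{is} true and sufficient is a bound of the form $\Haus{n-1}(S)\epsilon^{1-n}+o(\epsilon^{1-n})$, because the $F$-integrand on each such cube is bounded by $\frac12$, giving total error $\le\frac12\epsilon^{n-1}\cdot\bigl(\Haus{n-1}(S)\epsilon^{1-n}+o(\epsilon^{1-n})\bigr)=\frac12\Haus{n-1}(S)+o(1)$ — but the statement wants the clean inequality \eqref{eq:modulus_continuity} for \emph{all} $\epsilon>0$, not asymptotically. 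Given this tension, I suspect the actual proof works with a slightly different accounting — perhaps bounding each bad cube's contribution not by $\frac12$ but absorbing the $\Haus{n-1}$-term via a direct comparison of the \emph{families} (replace each cube of $\mathcal F_\epsilon$ meeting $S$ by nothing, and note the lost cubes are fewer than $\Haus{n-1}(S)\epsilon^{1-n}$ by a packing argument that uses that disjoint cubes meeting an $(n-1)$-set of finite measure cannot number more than its measure divided by $\epsilon^{n-1}$ up to the geometry of how cubes meet surfaces) — and the $\frac12\epsilon^{n-1}$ is the residual cardinality-mismatch term. I would therefore structure the proof as: (1) the pointwise estimate \eqref{eq:cube_bound_plan}; (2) the dichotomy that only cubes meeting $S=\partial^*F\Delta\partial^*E$ contribute, via \eqref{eq:symmbou} and density-$0$/$1$ points; (3) the packing bound $\#\{Q'\in\mathcal F_\epsilon: Q'\cap S\neq\emptyset\}\le \Haus{n-1}(S)\,\epsilon^{1-n}$ modulo the admissibility correction $\frac12\epsilon^{n-1}$; (4) assemble. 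Step (3) is the crux and the rest is bookkeeping.
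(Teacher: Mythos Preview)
Your diagnosis of the two error mechanisms --- the pointwise comparison and the cardinality mismatch absorbed by the $\tfrac12\epsilon^{n-1}$ term --- is correct, and your step-(3) observation that on a cube disjoint from $\partial^*(E\Delta F)$ one has either $\uno_F=\uno_E$ or $\uno_F=1-\uno_E$ a.e.\ (hence equal oscillations) is exactly right. But step (4) is a genuine gap, and not merely a technicality you have yet to fill in.

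The packing estimate you would need --- that a disjoint family of $\epsilon$-cubes each meeting $S=(\partial^*F\Delta\partial^*E)\cap\Omega$ has cardinality at most $\Haus{n-1}(S)\,\epsilon^{1-n}$, or even a fixed dimensional multiple thereof --- is not available for \emph{fixed} $\epsilon>0$. Even for rectifiable $S$, the Lebesgue measure of its $\epsilon\sqrt{n}$-neighbourhood need not be comparable to $\epsilon\,\Haus{n-1}(S)$ at a prescribed scale; your rectifiability-plus-approximate-flatness route yields at best an asymptotic bound as $\epsilon\downarrow 0$ with a dimensional constant, whereas the lemma asserts the inequality for \emph{all} $\epsilon>0$. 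You sensed this tension yourself, and it is real: this is the wrong mechanism.

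The paper never counts cubes. The device is precisely the one you brushed against in step (3): compare $\uno_F$ with \emph{both} $\uno_E$ and $\uno_{E^c}$ (whose oscillations coincide), so that the per-cube error is controlled by
\[
\epsilon^{-n}\min\bigl\{|Q'\cap(E\Delta F)|,\,|Q'\setminus(E\Delta F)|\bigr\}.
\]
Now apply the relative isoperimetric inequality in the cube (the rescaled form \eqref{eq:relative2} of \eqref{eq:Hadwiger}) to bound this minimum by $\tfrac{\epsilon}{2}\Per(E\Delta F,Q')$. Summing over the disjoint family $\mathcal F_\epsilon$ and using the \emph{additivity} of $\Per(E\Delta F,\cdot)$ converts the total oscillation error into $\Per(E\Delta F,\Omega)$, which by \eqref{eq:repPer} and \eqref{eq:symmbou} is at most $\Haus{n-1}\bigl((\partial^*F\Delta\partial^*E)\cap\Omega\bigr)$. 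The cardinality mismatch is handled exactly as you describe: discard the at most $(\Per(F,\Omega)-\Per(E,\Omega))^+\epsilon^{1-n}+1$ excess cubes, each worth at most $\tfrac12\epsilon^{n-1}$, giving the further contribution $\tfrac12(\Per(F,\Omega)-\Per(E,\Omega))^+ +\tfrac12\epsilon^{n-1}\le \tfrac12\Haus{n-1}\bigl((\partial^*F\Delta\partial^*E)\cap\Omega\bigr)+\tfrac12\epsilon^{n-1}$. Note that your step-(3) dichotomy is subsumed by this argument: on cubes with $Q'\cap\partial^*(E\Delta F)=\emptyset$ one has $\Per(E\Delta F,Q')=0$ and the error term vanishes automatically. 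The isoperimetric inequality handles the remaining cubes without any covering or packing argument.
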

\begin{proof} The inequality $\min\{z,1-z\}\leq 2z(1-z)$ in $[0,1]$ combined with \eqref{eq:Hadwiger} yields the relative isoperimetric inequality
\begin{equation}\label{eq:relative2}
\min\{|L|,|Q'\setminus L|\}\leq \frac{\epsilon}{2}\Per(L,Q')\qquad\text{for any $\epsilon$-cube $Q'$ with $L\subset Q'$.}
\end{equation}
Let now $\mathcal F_\epsilon$ be a family of $\epsilon$-cubes contained in $\Omega$ with cardinality less than
$\epsilon^{1-n}\Per(F,\Omega)$. For any $Q'\in\mathcal F_\epsilon$, adding and subtracting $\uno_E$
we have
$$
\mean{Q'}\mean{Q'}|\uno_F(x)-\uno_F(y)|\,dx\,dy\leq \mean{Q'}\mean{Q'}|\uno_E(x)-\uno_E(y)|\,dx\,dy+
\epsilon^{-n}|Q'\cap (F\Delta E)|.
$$
Analogously,  adding and subtracting
$\uno_{Q'\setminus E}$ and using $\uno_{E^c}(x)-\uno_{E^c}(y)=\uno_E(y)-\uno_E(x)$, we have
$$
\mean{Q'}\mean{Q'}|\uno_F(x)-\uno_F(y)|\,dx\,dy\leq \mean{Q'}\mean{Q'}|\uno_E(x)-\uno_E(y)|\,dx\,dy+
\epsilon^{-n}|Q'\cap (F\Delta E^c)|.
$$
Since $F\Delta E=\Omega\setminus (F\Delta E^c)$, we can apply \eqref{eq:relative2} with
$L=Q'\cap (F\Delta E)$, single out from
$\mathcal F_\epsilon$ a maximal subfamily 
with cardinality less than $\epsilon^{1-n}\Per(E,\Omega)$,  and use \eqref{eq:trivial}
and the
definition of $\J_\epsilon(E,\Omega)$ to get
\begin{eqnarray*}
\epsilon^{n-1}\sum_{Q'\in\mathcal F_\epsilon}\mean{Q'}\mean{Q'}|\uno_F(x)-\uno_F(y)|&\leq&\J_\epsilon(E,\Omega)+\frac 12\epsilon^{n-1}+
\frac 12\bigl(\Per(F,\Omega)-\Per(E,\Omega)\bigr)^+\\&+&\frac 12\sum_{Q'\in\mathcal F_\epsilon}\Per(F\Delta E,Q').
\end{eqnarray*}
Then, we use the additivity of $\Per(F\Delta E,\cdot)$ and take the supremum in the left hand side to obtain
$$
\J_\epsilon(F,\Omega)\leq\J_\epsilon(E,\Omega)+\frac 12\epsilon^{n-1}+\frac 12\bigl(\Per(F,\Omega)-\Per(E,\Omega)\bigr)^++\frac 12\Per(F\Delta E,\Omega),
$$
and we conclude using
 \eqref{eq:repPer} and \eqref{eq:symmbou}.
 \end{proof}

Notice that, in particular, the previous lemma gives
\begin{equation}\label{eq:modulus_continuitybis}
\J_-(F,\Omega)\leq\J_-(E,\Omega)+\Haus{n-1}\bigl((\partial^*F\Delta\partial^*E)\cap\Omega\bigr).
\end{equation}

In the third lemma we prove a density lower bound for $\J_-(E,\cdot)$ by comparing $E$ on small scales
with the subgraph of a $C^1$ function.

\begin{lemma} \label{lem:3} If $E$ has locally finite perimeter in $\Omega$, then
\begin{equation}\label{eq:finlim} 
\liminf_{r\to 0^+}\frac{\J_-(E,B_r(x))}{\omega_{n-1}r^{n-1}}\geq \frac 12
\qquad\text{for $|D\uno_E|$-a.e. $x\in\Omega$.}
\end{equation}
\end{lemma}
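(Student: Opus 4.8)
The plan is to localize at a reduced-boundary point $x\in\partial^*E$ where the blow-up of $E$ converges to a half-space and where $\Haus{n-1}$-density of $\partial^*E$ equals $\omega_{n-1}$, and then to transfer the lower bound from Lemma~\ref{lem:c1blow} using the stability estimate \eqref{eq:modulus_continuitybis}. Concretely, since $|D\uno_E| = \Haus{n-1}\res\partial^*E$ by \eqref{eq:repPer} and since $|D\uno_E|$-almost every point lies on one of the $C^1$ hypersurfaces $\Gamma_i$ of Theorem~\ref{thm:dgfe}(ii), it suffices to prove \eqref{eq:finlim} at a fixed such point $x\in\Omega\cap\partial^*E\cap\Gamma_i$ that is additionally a point where (iii) holds and a Lebesgue point (w.r.t. $\Haus{n-1}\res\Gamma_i$) of the relevant densities. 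After a translation we assume $x=0$.

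The key comparison is between $E$ and the subgraph $E_i$ of the $C^1$ function whose graph is $\Gamma_i$ near $0$: call it $E_i$, chosen on the side dictated by the inner normal $\nu_E(0)$. Rescaling by $1/r$ and using \eqref{eq:scalingJ-bis}, the claim \eqref{eq:finlim} for $B_r(0)$ is equivalent to $\liminf_{r\to0^+}\J_-(E/r,B_1)\geq\frac12\omega_{n-1}$. Now apply \eqref{eq:modulus_continuitybis} on $\Omega=B_1$ with $F=E/r$, $E$ replaced by $E_i/r$:
\begin{equation}\label{eq:plan-compare}
\J_-(E_i/r,B_1)\leq\J_-(E/r,B_1)+\Haus{n-1}\bigl((\partial^*(E/r)\Delta\partial^*(E_i/r))\cap B_1\bigr).
\end{equation}
By Lemma~\ref{lem:c1blow}, $\liminf_{r\to0^+}\J_-(E_i/r,B_1)\geq\frac12\omega_{n-1}$. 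So it remains to show that the error term in \eqref{eq:plan-compare} tends to $0$ as $r\downarrow0$, i.e.
\begin{equation}\label{eq:plan-error}
\lim_{r\to0^+}r^{1-n}\,\Haus{n-1}\bigl((\partial^*E\,\Delta\,\partial^*E_i)\cap B_r(0)\bigr)=0.
\end{equation}
Here I use that $\partial^*(E/r)=(\partial^*E)/r$ and the scaling of $\Haus{n-1}$. To get \eqref{eq:plan-error}, write $\Haus{n-1}((\partial^*E\Delta\partial^*E_i)\cap B_r)\leq \bigl(\Haus{n-1}(\partial^*E\cap B_r)-\Haus{n-1}(\partial^*E\cap\partial^*E_i\cap B_r)\bigr)+\bigl(\Haus{n-1}(\partial^*E_i\cap B_r)-\Haus{n-1}(\partial^*E\cap\partial^*E_i\cap B_r)\bigr)$. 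Both $\Haus{n-1}(\partial^*E\cap B_r)$ and $\Haus{n-1}(\partial^*E_i\cap B_r)$ are asymptotic to $\omega_{n-1}r^{n-1}$ (the first because $0$ is a point of density $\omega_{n-1}$ for $\Haus{n-1}\res\partial^*E$, since $|D\uno_E|$ has $(n-1)$-density equal to $\omega_{n-1}$ $|D\uno_E|$-a.e.; the second because $\Gamma_i$ is $C^1$). And $\Haus{n-1}(\partial^*E\cap\partial^*E_i\cap B_r)$ is also asymptotic to $\omega_{n-1}r^{n-1}$, because $\Haus{n-1}$-a.e.\ point of $\Gamma_i$ lies in $\partial^*E\cap\partial^*E_i$ — indeed near $0$ we may arrange $\partial^*E_i=\Gamma_i$ (up to $\Haus{n-1}$-null sets), and $\Haus{n-1}(\Gamma_i\setminus\partial^*E)=0$ follows from \eqref{eq:rettibou} together with $|D\uno_E|=\Haus{n-1}\res\partial^*E$; one uses that the portion of $\Gamma_i$ where $E$ has density $0$ or $1$ carries no $|D\uno_E|$ mass and, being inside a $C^1$ hypersurface through a De Giorgi point, is $\Haus{n-1}$-null. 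Subtracting, \eqref{eq:plan-error} follows.

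The main obstacle is making \eqref{eq:plan-error} genuinely rigorous: one must be careful that the chosen point $0$ is simultaneously (a) a reduced-boundary point with half-space blow-up, (b) a point on some $\Gamma_i$, (c) a Lebesgue density point for $\Haus{n-1}\res\partial^*E$ with density $\omega_{n-1}$, and (d) a point at which the ``overlap'' $\Haus{n-1}(\partial^*E\cap\Gamma_i\cap B_r)/(\omega_{n-1}r^{n-1})\to1$; all four hold $|D\uno_E|$-a.e., but combining the $C^1$-regularity of $\Gamma_i$ with the mere rectifiability of $\partial^*E$ to pin down the density of the symmetric difference requires the Besicovitch–Vitali covering / density theorems for $\Haus{n-1}$ on rectifiable sets and the fact that two $C^1$ hypersurfaces have the same tangent $\Haus{n-1}$-a.e.\ on their intersection (Theorem~\ref{thm:dgfe}(iv) and its proof). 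Once these standard facts are assembled, \eqref{eq:plan-compare}, Lemma~\ref{lem:c1blow} and \eqref{eq:scalingJ-bis} give \eqref{eq:finlim} immediately.
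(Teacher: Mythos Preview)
Your approach is the paper's: pick $x\in\Gamma_i\cap\partial^*E$, compare $E$ with the $C^1$ subgraph $E_i$ determined by $\Gamma_i$, combine \eqref{eq:modulus_continuitybis}, \eqref{eq:scalingJ-bis} and Lemma~\ref{lem:c1blow}, and reduce to showing $\Haus{n-1}\bigl((\partial^*E\,\Delta\,\Gamma_i)\cap B_r(x)\bigr)=o(r^{n-1})$.

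One sentence needs correcting. The global assertion ``$\Haus{n-1}(\Gamma_i\setminus\partial^*E)=0$'' is false in general: the hypersurfaces in \eqref{eq:rettibou} merely \emph{cover} $\partial^*E$ up to a null set, and $\Gamma_i$ may have large pieces disjoint from $\partial^*E$. Your accompanying justification (``carries no $|D\uno_E|$ mass \dots\ is $\Haus{n-1}$-null'') conflates $|D\uno_E|$-measure with $\Haus{n-1}$-measure and does not work. What you actually need, and what you correctly list as condition (d), is the \emph{local} statement that at the chosen point the $\Haus{n-1}$-density of $\Gamma_i\setminus\partial^*E$ vanishes. The paper obtains this (and the companion statement for $\partial^*E\setminus\Gamma_i$) directly from the elementary density fact, recorded in its footnote, that $\Haus{n-1}(S\cap B_r(x))=o(r^{n-1})$ for $\Haus{n-1}$-a.e.\ $x\in\R^n\setminus S$ whenever $S$ has locally finite $\Haus{n-1}$-measure; applying this with $S=\Gamma_i\setminus\partial^*E$ and with $S=\partial^*E\setminus\Gamma_i$ gives \eqref{eq:plan-error} immediately. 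Once you replace the global claim by this local one, your argument is complete and coincides with the paper's.
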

\begin{proof} Recall that $|D\uno_E|=\Haus{n-1}\res\partial^*E$ on Borel sets of $\Omega$.
In view of \eqref{eq:modulus_continuitybis}, the scaling property \eqref{eq:scalingJ-bis} of $\J_-$, and Lemma~\ref{lem:c1blow}, it
suffices to show that for $\Haus{n-1}$-a.e. $x\in\partial^*E$ there exists a set $F$ which is the subgraph of a $C^1$ function
in the neighbourhood of $x$, with
\begin{equation}\label{eq:osmall}
\Haus{n-1}\bigl((\partial^*F\Delta\partial^*E)\cap B_r(x))=o(r^{n-1}).
\end{equation}
To this aim, we use the representation \eqref{eq:rettibou}, we fix $i$ and consider a point $x\in\Gamma_i\cap\partial^* E$ where\footnote{Here we use (first with $S=\Gamma_i$, then with $S=\partial^*E$) 
the property that $\Haus{n-1}(S\cap B_r(x))=o(r^{n-1})$ for $\Haus{n-1}$-a.e. $x\in\R^n\setminus S$
whenever $S$ has locally finite $\Haus{n-1}$-measure, see for instance \cite[pag. 79, Eq. (2.41)]{afp}.}
$$
\Haus{n-1}\bigl((\Gamma_i\setminus \partial^*E)\cap B_r(x)\bigr)=o(r^{n-1})\quad\text{and}\quad
\Haus{n-1}\bigl((\partial^*E\setminus \Gamma_i)\cap B_r(x)\bigr)=o(r^{n-1}).
$$
In this way we obtain
that \eqref{eq:osmall} holds for $\Haus{n-1}$-a.e. $x\in\Gamma_i\cap\partial^* E$ and
the statement is proved, since $i$ is arbitrary and
$\Gamma_i$ is a $C^1$ hypersurface.
\end{proof}

We can now prove
the missing part $\J_-(E,\Omega)\geq \Per(E,\Omega)/2$ of Theorem~\ref{thm:limper}. 
Let $S\subset\Omega$ be the set where the $\liminf$ in \eqref{eq:finlim} is greater or equal than $1/2$, and notice that
Lemma~\ref{lem:3} shows that $|D\uno_E|\res\Omega$ is concentrated on $S$, so that $\Haus{n-1}(S)\geq
\Per(E,\Omega)$.  If $2\J_-(E,\cdot)=:\mu(\cdot)$ were a $\sigma$-additive measure,  then the well-known implication
\begin{equation}\label{eq:dlb}
\liminf_{r\to 0^+}\frac{\mu(B_r(x))}{\omega_{n-1}r^{n-1}}\geq 1\,\,\,\forall\, x\in S
\quad\Longrightarrow\quad \mu(\Omega)\geq \Haus{n-1}(S)
\end{equation}
would provide us with the needed inequality (see for instance \cite[Theorem~2.56]{afp} for a proof
of \eqref{eq:dlb}). 
However, the traditional proof of \eqref{eq:dlb} works also when $\mu$ is only a superadditive set function defined
on open sets, as we illustrate below.
In particular \eqref{eq:dlb} is applicable to $2\J_-(E,\cdot)$ in view of \eqref{eq:superadd-bis},
which concludes the proof of Theorem~\ref{thm:limper}.\hfill$\square$

\bigskip

We now give a sketch of proof of \eqref{eq:dlb} in the superadditive case, writing $k=n-1$ for convenience.
We can assume without loss of generality $S\Subset\Omega$.  To prove (\ref{eq:dlb}) we fix $\delta\in (0,1)$
and consider all the open balls $\mathcal C$ centered at points of $L$ and with diameter
$d_{\mathcal C}$ strictly less than $\delta$, such that $\mu(\mathcal C)\geq (1-\delta)
\omega_k d_{\mathcal C}^k/2^k$.  By applying Besicovitch covering theorem (see for instance
 \cite[Theorem~2.17]{afp}) we obtain families $\mathcal F_1,\ldots,\mathcal F_\xi$ (with $\xi=\xi(n)$
dimensional constant) with the following properties:
\begin{itemize}
\item[(a)] each family $\mathcal F_i$, $1\leq i\leq\xi$, is disjoint;
\item[(b)] $\cup_1^\xi\bigcup\mathcal F_i$ contains $S$.
\end{itemize}
In particular, using the superadditivity of $\mu$ 
we can estimate from above the pre-Hausdorff measure $\Haus{k}_\delta(L)$ as follows:
$$
\Haus{k}_\delta(S)\leq \sum_{i=1}^\xi\sum_{\mathcal C\in\mathcal F_i} {\omega_k\over 2^k} d_{\mathcal C}^k
\leq \frac{1}{1-\delta}
 \sum_{i=1}^\xi\sum_{\mathcal C\in\mathcal F_i} \mu(\mathcal C)\leq
{\xi\over 1-\delta}\mu(\Omega).$$
By letting $\delta\downarrow 0$ we obtain that $\Haus{k}(S)\leq\xi\,\mu(\Omega)<\infty$.  Using this
information we can improve the estimate, now applying Besicovitch--Vitali covering theorem to the above
mentioned fine cover of $S$, to obtain a disjoint family $\{\mathcal C_i\}$ which covers $\Haus{k}$-almost all 
(hence $\Haus{k}_\delta$-almost all) of $S$.  As a
consequence
$$
\Haus{k}_\delta(S)\leq \sum_i {\omega_k\over 2^k}d_{\mathcal C_i}^k
\leq\sum_i{1\over 1-\delta}\mu(\mathcal C_i)\leq{1\over 1-\delta}\mu(\Omega).
$$
Letting $\delta\downarrow 0$ we finally obtain
$\Haus{k}(S)\leq\mu(\Omega)$, as desired. \hfill$\square$

\subsection{Proof of Theorem~\ref{thm:mainintro} in the case $n=1$.}  Note that 
\begin{equation}\label{defIepsn=1}
\I_\epsilon(\uno_A):=\sup_{I}
\mean{I}\Bigl|\uno_A(x)-\mean{I} \uno_A\Bigr|\, dx,
\end{equation}
and $I$ runs among all intervals with length $\epsilon$. Recall (see for instance \cite{afp}) that in
the 1-dimensional case any set of finite and positive perimeter is equivalent to a finite disjoint union of closed
intervals or half-lines, and the perimeter is the number of the endpoints; in addition $\Per(A)=0$ if and only
if either $|A|=0$ or $|\R\setminus A|=0$.

The inequalities $\I_\epsilon(\uno_A)\leq 1/2$ and $I_\epsilon(\uno_A)\leq\Per(A)$ follow
by \eqref{eq:trivial} and \eqref{eq:Hadwiger} respectively, as in the case $n>1$, hence
$2\limsup_\epsilon\I_\epsilon(\uno_A)\leq\min\{1,\Per(A)\}$. It remains to prove
$$
2\liminf_{\epsilon\to 0} \I_\epsilon(\uno_A)\geq\min\{1,\Per(A)\}
$$
and, since $\Per(A)$ is always a natural number (possibly infinite), we need only to show that
$\Per(A)\geq 1$ implies $2\liminf_\epsilon \I_\epsilon(\uno_A)\geq 1$.
We will prove the stronger implication
\begin{equation}\label{eq:june9}
\Per(A)>0 \quad\Longrightarrow\quad 2\liminf_{\epsilon\to 0} \I_\epsilon(\uno_A)\geq 1.
\end{equation}
To prove \eqref{eq:june9}, notice that $\Per(A)>0$ implies that both $A$ and $\R\setminus A$ have nontrivial measure, so there exist distinct points
$x_1,\,x_2\in\R$ such that $x_1$ is a density point of $A$ and $x_2$ is a density point of
$\R\setminus A$. Hence, for $\epsilon$ sufficiently small we have then
$$
\mean{(x_1-\epsilon/2,x_1+\epsilon/2)} \uno_A>\frac 12
\qquad\text{and}\qquad
\mean{(x_2-\epsilon/2,x_2+\epsilon/2)} \uno_A<\frac 12
$$
We can then use a continuity argument to find, for $\epsilon$ sufficiently small,
a point $x_\epsilon$ such that 
$$
\mean{(x_\epsilon+\epsilon/2,x_\epsilon+\epsilon/2)} \uno_A=\frac 12,
$$
so that $$\mean{(x_\epsilon-\epsilon/2,x_\epsilon+\epsilon/2)}\Bigl|\uno_A(x)-\mean{(x_\epsilon-\epsilon/2,x_\epsilon+\epsilon/2)}\uno_A\Bigr|\,dx=\frac 12.$$
Hence $\Per(A)>0$ implies $2\I_\epsilon(\uno_A)\geq 1$ for $\epsilon$ small enough,
so in particular $2\liminf_\epsilon \I_\epsilon(\uno_A)\geq 1$, as desired.\hfill$\square$

\section{Variants}
\subsection{A localized version of Theorem \ref{thm:mainintro}}
\label{sect:XX}
Let $\Omega\subset \R^n$ be a bounded domain with Lipschitz boundary
and consider the quantity 
\begin{equation}\label{defIeps omega}
\I_\epsilon(f,\Omega):=\epsilon^{n-1}\sup_{{\mathcal F}_\epsilon}\sum_{Q'\in\mathcal F_\epsilon}
\mean{Q'}\Bigl|f(x)-\mean{Q'} f\Bigr|\, dx,
\end{equation}
where ${\mathcal F}_\epsilon$ denotes a collection of disjoint $\epsilon$-cubes $Q'\subset\Omega$
with \emph{arbitrary} orientation and cardinality not exceeding $\epsilon^{1-n}$. 

In analogy with Theorem~\ref{thm:mainintro}, we can also prove the following result.

\begin{theorem}
\label{thm:per}For any measurable set $A\subset\R^n$ one has
$$
\lim_{\epsilon\to 0}\I_\epsilon(\uno_A,\Omega)=\frac 12\min\bigl\{1,\Per(A,\Omega)\bigr\}.
$$
\end{theorem}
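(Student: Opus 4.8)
The plan is to adapt, with $\Omega$ in place of $\R^n$, the three‑step scheme of Section~\ref{sec:main}, the only new feature being that every $\epsilon$-cube is now constrained to lie inside $\Omega$. For the upper bound $\I_\epsilon(\uno_A,\Omega)\le\frac12\min\{1,\Per(A,\Omega)\}$ I would argue exactly as in \S\ref{sec:upperbound}: the bound by $1/2$ is \eqref{eq:trivial} together with the constraint $\#\mathcal F_\epsilon\le\epsilon^{1-n}$, while the bound by $\frac12\Per(A,\Omega)$ follows by applying the rescaled inequality \eqref{eq:Hadwiger} on each cube $Q'\subset\Omega$ and summing over the disjoint family, using the additivity of $\Per(A,\cdot)$; note that only the isoperimetric inequality \emph{on cubes} enters, so no regularity of $\partial\Omega$ is needed here. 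Passing to the $\limsup$ gives one inequality.

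For the lower bound when $\Per(A,\Omega)<\infty$ I would use Theorem~\ref{thm:limper}, which already gives $\lim_\epsilon\J_\epsilon(A,\Omega)=\frac12\Per(A,\Omega)$ for the localized functional $\J_\epsilon(A,\Omega)$ (disjoint $\epsilon$-cubes in $\Omega$ with cardinality $\le\Per(A,\Omega)\epsilon^{1-n}$). If $\Per(A,\Omega)\le1$, then every $\J_\epsilon(A,\Omega)$-admissible family is $\I_\epsilon(\uno_A,\Omega)$-admissible, so $\J_\epsilon(A,\Omega)\le\I_\epsilon(\uno_A,\Omega)$ and $\liminf_\epsilon\I_\epsilon(\uno_A,\Omega)\ge\frac12\Per(A,\Omega)=\frac12\min\{1,\Per(A,\Omega)\}$ (the case $\Per(A,\Omega)=0$ being trivial from the upper bound). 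If $\Per(A,\Omega)>1$ I would repeat the splitting argument used right after Theorem~\ref{thm:limper}: writing $k=[\Per(A,\Omega)]\ge1$ and splitting any family entering the definition of $\J_\epsilon(A,\Omega)$ into $k$ subfamilies of cardinality $[\epsilon^{1-n}]$ — each admissible for $\I_\epsilon(\uno_A,\Omega)$, since its cubes lie in $\Omega$ — plus a remainder of cardinality $\le\Per(A,\Omega)\epsilon^{1-n}-k[\epsilon^{1-n}]$ controlled by \eqref{eq:trivial}, and taking the supremum, one obtains $\J_\epsilon(A,\Omega)\le k\,\I_\epsilon(\uno_A,\Omega)+\frac12(\Per(A,\Omega)-k)+k\epsilon^{n-1}$, whence $2\liminf_\epsilon\I_\epsilon(\uno_A,\Omega)\ge1=\min\{1,\Per(A,\Omega)\}$ by Theorem~\ref{thm:limper}.

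The case $\Per(A,\Omega)=\infty$, where I must show $\liminf_\epsilon\I_\epsilon(\uno_A,\Omega)\ge\frac12$, is handled by localizing the argument of \S\ref{sect:unrectif}. Since every $\phi\in C^1_c(\Omega)$ is supported in some open $\Omega'\Subset\Omega$, one has $\sup\{\Per(A,\Omega'):\Omega'\Subset\Omega\ \text{open}\}=\Per(A,\Omega)=\infty$; so, given $K>0$, I fix an open $\Omega'\Subset\Omega$ with $\Per(A,\Omega')$ sufficiently large and $d:=\mathrm{dist}(\Omega',\partial\Omega)>0$, and run the construction of Lemma~\ref{lem1} relative to $\Omega'$. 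With $A_{\delta/2}$ the union of the $\delta/2$-cubes of the canonical partition on which $A$ has density $>1/2$, lower semicontinuity of $\Per(\cdot,\Omega')$ along $A_{\delta/2}\to A$ in $L^1_{\rm loc}$ forces $\Per(A_{\delta/2},\Omega')$ to stay large for $\delta$ small, hence more than $8^nK\delta^{1-n}$ of the ``boundary cubes'' of $A_{\delta/2}$ have a contributing face inside $\Omega'$; for $\delta$ small enough in terms of $d$ and $n$, all of these cubes — together with the auxiliary $\delta$-cubes $Q_i''$ and the factor-$8$ enlargements used in the proof — lie in $\Omega$, so the maximal-spacing selection yields a disjoint family $\mathcal U_\delta$ of $\delta$-cubes \emph{contained in $\Omega$}, satisfying (a) and (c) of Lemma~\ref{lem1} with $\#\mathcal U_\delta>K\delta^{1-n}$. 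From here the argument is verbatim the end of \S\ref{sect:unrectif}: with $c_0=2^{-n-1}$, $c_1=c_1(c_0)$ from Lemma~\ref{lem2} and $K=1/c_1$, for $\epsilon\ll\delta$ one applies Lemma~\ref{lem2} to a rescaled copy of each $Q'\in\mathcal U_\delta$, obtaining inside $Q'\subset\Omega$ a disjoint family of $\epsilon$-cubes with $|V\cap A|/|V|=\frac12$ and cardinality $>c_1(\delta/\epsilon)^{n-1}$; the union over $\mathcal U_\delta$ is a disjoint family of more than $\epsilon^{1-n}$ such cubes in $\Omega$, and extracting $[\epsilon^{1-n}]$ of them gives $\I_\epsilon(\uno_A,\Omega)\ge\frac12\epsilon^{n-1}[\epsilon^{1-n}]\to\frac12$.

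The only genuinely new step, and the main obstacle, is this localization of Lemma~\ref{lem1}: after passing to a compactly contained subdomain $\Omega'$ of large perimeter, one must verify that \emph{all} the cubes produced by the construction — the $\delta/2$-cubes, their neighbours, the containing $\delta$-cubes, and the factor-$8$ enlargements needed for the cardinality count — remain inside $\Omega$, which is exactly where the compact containment $\Omega'\Subset\Omega$ enters and which dictates how small $\delta$ must be taken. Everything downstream of $\mathcal U_\delta$, in particular Lemma~\ref{lem2} (which is local in character), together with the entire upper bound and finite-perimeter lower bound, transcribes without change from Section~\ref{sec:main}.
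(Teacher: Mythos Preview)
Your proposal is correct and follows essentially the same route as the paper: the upper bound and the finite-perimeter lower bound are declared local and handled exactly via \S\ref{sec:upperbound} and Theorem~\ref{thm:limper} (with the same splitting argument), while for $\Per(A,\Omega)=\infty$ the paper, like you, passes to a compactly contained $\Omega'\Subset\Omega$ with $\Per(A,\Omega')$ large and reruns Lemma~\ref{lem1} restricted to $\delta$-cubes meeting $\Omega'$, so that for $\delta$ small all auxiliary cubes stay inside $\Omega$, after which Lemma~\ref{lem2} and the end of \S\ref{sect:unrectif} apply verbatim. Your identification of the only genuinely new step --- keeping the $\delta$-, $\delta/2$-, and enlarged cubes inside $\Omega$ via the buffer $d=\mathrm{dist}(\Omega',\partial\Omega)$ --- is exactly what the paper singles out.
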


\begin{proof}
We begin by noticing that both the upper and the lower bound in the rectifiable case are local, so that 
parts of the proof go throughout without any essential modification.
Hence, we only need to discuss the lower bound in the non-rectifiable case.

If $\Per(A,\Omega)=\infty$, we can find an open smooth subset $\Omega' \Subset \Omega$
such that $\Per(A,\Omega')$ is arbitrarily large (the largeness will be fixed later).
We set $c_0=2^{-n-1}$, consider $c_1$ be given by Lemma~\ref{lem2}, and set $K:=1/c_1$. 
Then, by looking at the proof of Lemma~\ref{lem1} 
it is immediate to check that the same result still holds
with $K=1/c_1$ and considering only $\delta$-cubes 
which intersect $\Omega'$ (this ensures that, if $\delta$ is sufficiently small, all cubes are contained inside $\Omega$) provided $\Per(A;\Omega')>2^{2n+2}n K$.
Thanks to this fact, the proof at the end of Section \ref{sect:unrectif} now goes through without modifications: first we apply 
Lemma~\ref{lem1} to find a
disjoint family ${\mathcal U}_\delta$ of $\delta$-cubes intersecting $\Omega'$ with $\#\,{\mathcal U}_\delta>c_1^{-1}\delta^{1-n}$ and
$$
c_0<\frac{|Q'\cap A|}{|Q'|}< 1-c_0\qquad\text{for all $Q'\in\mathcal U_\delta$,}
$$
and then we apply Lemma~\ref{lem2} with $\epsilon \ll \delta$ to obtain, for each $Q'\in \mathcal U_\delta$, a disjoint family
${\mathcal G}_{\epsilon}(Q')$ of $\epsilon$-cubes satisfying
$$
\frac{|V\cap A|}{|V|}=\frac 12\qquad\forall\, V\in{\mathcal G}_{\epsilon}(Q'),
$$
$$
\#\,{\mathcal G}_{\epsilon}(Q')>c_1\biggl(\frac{\delta}{\epsilon}\biggr)^{n-1}.
$$
We then conclude as in Section~\ref{sect:unrectif}.
\end{proof}

Next, we return to the quantity $[f]$ defined in \cite{bbm}.
As announced in the introduction, we establish the following result:
\begin{corollary}
\label{cor:bbm}
For any measurable set $A\subset Q$ one has
$$
[\uno_A]\leq \frac 12\min\bigl\{1,\Per(A,Q)\bigr\} \leq C\,[\uno_A].
$$
\end{corollary}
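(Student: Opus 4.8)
The plan is to establish the two inequalities separately: the left one is essentially free, since axis-parallel cubes form a subclass of arbitrarily oriented ones, while the right one is obtained by re-running the argument of Remark~\ref{rem:simple_idea}, taking care that it uses only axis-parallel (dyadic) cubes and the \emph{relative} perimeter in $Q$.

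For the inequality $[\uno_A]\leq\frac12\min\{1,\Per(A,Q)\}$, I would observe that any disjoint collection $\mathcal G_\epsilon$ of axis-parallel $\epsilon$-cubes contained in $Q$ with $\#\,\mathcal G_\epsilon\leq\epsilon^{1-n}$ is admissible in the definition \eqref{defIeps omega} of $\I_\epsilon(\uno_A,Q)$, since there the cubes are only required to have arbitrary orientation. Hence $[\uno_A]_\epsilon\leq\I_\epsilon(\uno_A,Q)$ for every $\epsilon\in(0,1)$; passing to the $\limsup$ as $\epsilon\downarrow 0$ and invoking Theorem~\ref{thm:per} with $\Omega=Q$ (a bounded Lipschitz domain) gives $[\uno_A]\leq\lim_\epsilon\I_\epsilon(\uno_A,Q)=\frac12\min\{1,\Per(A,Q)\}$.

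For the inequality $\frac12\min\{1,\Per(A,Q)\}\leq C[\uno_A]$ I would split into two cases according to a dimensional threshold $\xi_n\in(0,1/2]$ (one may take $\xi_n=2^{-n-1}$, the binding smallness constant in Remark~\ref{rem:simple_idea}). If $[\uno_A]\geq\xi_n$, then trivially $\frac12\min\{1,\Per(A,Q)\}\leq\frac12\leq\frac1{2\xi_n}[\uno_A]$. If $[\uno_A]<\xi_n$, then $[\uno_A]_{2^{-h}}<\xi_n$ for all $h$ large, and I would run the construction of Remark~\ref{rem:simple_idea} verbatim: the only cubes entering it are the canonical dyadic cubes $Q_{i,h}$ of side $2^{-h}$ and their pairwise unions of side $2^{1-h}$, all axis-parallel, so the estimates $\#\,F_h\leq 4\cdot 2^{h(n-1)}[\uno_A]_{2^{-h}}$ and the analogous bound for the number of faces shared by an ``interior'' cube and an ``exterior'' cube may be read off from $[\uno_A]_{2^{-h}}$ and $[\uno_A]_{2^{1-h}}$ exactly as in \eqref{eq:trivial_idea1}, provided one only counts faces lying in the interior of $Q$. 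This yields a uniform bound $\Per({\rm Int}_h(A),Q)\leq\eta_n\sup_{h'\geq h}[\uno_A]_{2^{-h'}}$ with $\eta_n$ dimensional; since ${\rm Int}_h(A)\to A$ in $L^1(Q)$ (martingale convergence along the dyadic filtration, i.e. Lebesgue differentiation) and $E\mapsto\Per(E,Q)$ is $L^1(Q)$-lower semicontinuous, letting $h\to\infty$ gives $\Per(A,Q)\leq\eta_n[\uno_A]$, hence $\frac12\min\{1,\Per(A,Q)\}\leq\frac{\eta_n}{2}[\uno_A]$. Taking $C:=\max\{\frac1{2\xi_n},\frac{\eta_n}{2}\}$ concludes.

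The main obstacle, and the only point needing care beyond bookkeeping, is checking that the argument of Remark~\ref{rem:simple_idea} genuinely passes to the isotropy-free seminorm $[\cdot]$: one must verify that every cube used there is axis-parallel, so that the collections it produces are admissible for $[\cdot]_\epsilon$ and not merely for $\I_\epsilon$, and one must replace $\Per({\rm Int}_h(A))$ by $\Per({\rm Int}_h(A),Q)$ — discarding the faces contained in $\partial Q$ — so that the statement holds for every measurable $A\subset Q$ rather than only for $A\Subset Q$. Once this is in place, the lower-semicontinuity and limit step is routine.
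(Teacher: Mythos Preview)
Your proof is correct, but for the right-hand inequality you take a genuinely different route from the paper. The paper proves a scale-by-scale comparison (Lemma~\ref{lem:final}): for every $f\in L^1(Q)$ and every $\epsilon$,
\[
[f]_\epsilon\leq \I_\epsilon(f,Q)\leq C\,[f]_{\sqrt n\,\epsilon},
\]
via a bounded-overlap covering of each rotated $\epsilon$-cube by an axis-parallel $\sqrt n\,\epsilon$-cube; the corollary then follows immediately from Theorem~\ref{thm:per}, whose lower bound in turn rests on the full machinery of Sections~\ref{sect:unrectif}--\ref{sec:lbound} (including the rotated-cube construction of Lemma~\ref{lem2} and the blow-up argument). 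Your argument instead bypasses Lemma~\ref{lem:final} and the lower bound of Theorem~\ref{thm:per} altogether: you rerun the dyadic construction of Remark~\ref{rem:simple_idea} directly, observing that it uses only axis-parallel cubes and hence feeds into $[\,\cdot\,]_\epsilon$, and you handle the large-$[\uno_A]$ regime by the trivial bound $\tfrac12\min\{1,\Per(A,Q)\}\leq\tfrac12$. This is more elementary and self-contained for the corollary at hand---it needs only the easy upper bound $\I_\epsilon\leq\tfrac12\Per$ and the sketch in Remark~\ref{rem:simple_idea}---but it is specific to characteristic functions and does not yield the two-sided equivalence $[f]_\epsilon\asymp\I_\epsilon(f,Q)$ that the paper's lemma provides for general $f$. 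One small caution: Remark~\ref{rem:simple_idea} is itself a sketch, and when you ``run it verbatim'' you should make explicit that the $2^{1-h}$-cubes $\tilde Q$ attached to the $I_h$--$E_h$ faces have bounded overlap (so they split into $O_n(1)$ disjoint admissible families for $[\,\cdot\,]_{2^{1-h}}$), and that each such $\tilde Q$ can be chosen inside $Q$; both points are routine but not literally written in the remark.
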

\begin{proof}
The result is an immediate consequence of Theorem~\ref{thm:per}
and Lemma~\ref{lem:final} below, where we compare 
the quantities $\I_\epsilon(f)$ with their anisotropic counterparts $[f]_\epsilon$ as
defined in \cite{bbm}.
\end{proof}

\begin{lemma}\label{lem:final} For $\epsilon>0$ and $f\in L^1(Q)$ measurable, let $\I_\epsilon(f,Q)$ 
be defined as in \eqref{defIeps} with cubes $Q'\subset Q$ and let $[f]_\epsilon$ be
defined by \eqref{def[]eps}. Then
\begin{equation}\label{eq:may16}
[f]_\epsilon\leq \I_\epsilon (f,Q)\leq C\, [f]_{\sqrt{n}\epsilon}
\qquad \forall\,\epsilon \in (0,1/\sqrt{n}). 
\end{equation}
\end{lemma}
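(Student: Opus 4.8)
The plan is to prove the two inequalities separately; the left one is essentially a tautology, while the right one is the substantive part and rests on a geometric enlargement of rotated cubes into axis-parallel ones, followed by a coloring argument. For the left inequality $[f]_\epsilon\le\I_\epsilon(f,Q)$, I would simply observe that any collection $\mathcal G_\epsilon$ admissible in the definition \eqref{def[]eps} of $[f]_\epsilon$ — disjoint axis-parallel $\epsilon$-cubes $Q'\subset Q$ of cardinality $\le\epsilon^{1-n}$ — is also admissible in the definition \eqref{defIeps} of $\I_\epsilon(f,Q)$, since the latter only drops the requirement that the cubes be axis-parallel; hence the supremum defining $\I_\epsilon(f,Q)$ dominates the one defining $[f]_\epsilon$.

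For the right inequality, set $\delta:=\sqrt{n}\,\epsilon\in(0,1)$ and fix an admissible disjoint family $\mathcal F_\epsilon=\{Q_1',\dots,Q_N'\}$ of (arbitrarily oriented) $\epsilon$-cubes in $Q$ with $N\le\epsilon^{1-n}$. First I would associate to each $Q_i'$ an axis-parallel $\delta$-cube $\hat Q_i\subseteq Q$ containing it: the projection of a rotated $\epsilon$-cube onto any coordinate axis has length at most $\sqrt n\,\epsilon=\delta$ (Cauchy--Schwarz applied to the rows of the rotation matrix), so the open coordinate bounding box $\prod_l(\alpha_l,\beta_l)$ of $Q_i'$ has all sides $\le\delta$ and satisfies $0\le\alpha_l<\beta_l\le1$; since $\delta<1$ one may then choose $\gamma_l\in[\max\{0,\beta_l-\delta\},\min\{\alpha_l,1-\delta\}]$ and put $\hat Q_i=\prod_l(\gamma_l,\gamma_l+\delta)\subseteq Q$. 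This is exactly where the hypothesis $\epsilon<1/\sqrt n$ is used. Next, using $\mean{Q'}|f-\mean{Q'}f|\le 2\mean{Q'}|f-c|$ for every constant $c$, with $c=\mean{\hat Q_i}f$, and enlarging the domain of integration,
\[
\mean{Q_i'}\Bigl|f-\mean{Q_i'}f\Bigr|\,dx\le\frac{2}{|Q_i'|}\int_{\hat Q_i}\Bigl|f-\mean{\hat Q_i}f\Bigr|\,dx
=2\,\frac{|\hat Q_i|}{|Q_i'|}\mean{\hat Q_i}\Bigl|f-\mean{\hat Q_i}f\Bigr|\,dx
=2\,n^{n/2}\mean{\hat Q_i}\Bigl|f-\mean{\hat Q_i}f\Bigr|\,dx .
\]

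The core step is that, although the $\hat Q_i$ need not be disjoint, the family $\{\hat Q_1,\dots,\hat Q_N\}$ can be split into $m=m(n)$ subfamilies, each consisting of pairwise disjoint cubes and of cardinality $\le\delta^{1-n}$. I would impose on $\R^n$ the grid of $\delta$-cells $C_k=\delta k+[0,\delta)^n$, $k\in\Z^n$. If several $\hat Q_i$ have their centers in one cell $C_k$, they all lie in a single cube of side $2\delta$ centered at the center of $C_k$, and since the inner cubes $Q_i'$ are disjoint and each has volume $\epsilon^n$, their number is at most $(2\delta/\epsilon)^n=(2\sqrt n)^n=:\kappa$. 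On the other hand, if the cell-indices of two centers have the \emph{same} parity in $(\Z/2)^n$ but are distinct, they differ in some coordinate by an even integer $\ge2$, whence the two cubes are disjoint. Therefore coloring each $\hat Q_i$ by the pair (parity class of its cell-index, rank in $\{1,\dots,\kappa\}$ within its cell) produces $\kappa\,2^n$ disjoint subfamilies; finally subdividing these to meet the cardinality bound $\delta^{1-n}$ costs at most $\lceil n^{(n-1)/2}\rceil$ extra subfamilies, because $N\le\epsilon^{1-n}=n^{(n-1)/2}\,\delta^{1-n}$, so $m\le\kappa\,2^n+\lceil n^{(n-1)/2}\rceil$.

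To conclude, each resulting subfamily $\mathcal H_j$ is admissible for $[f]_\delta$, so $\sum_{\hat Q\in\mathcal H_j}\mean{\hat Q}|f-\mean{\hat Q}f|\le\delta^{1-n}[f]_\delta$. Summing the pointwise comparison over $i$ and grouping into the $\mathcal H_j$,
\[
\epsilon^{n-1}\sum_{i=1}^{N}\mean{Q_i'}\Bigl|f-\mean{Q_i'}f\Bigr|\,dx
\le 2\,n^{n/2}\,m\,\epsilon^{n-1}\,\delta^{1-n}\,[f]_\delta
=2\,n^{n/2}\,m\,n^{(1-n)/2}\,[f]_{\sqrt n\epsilon}
=2\,m\sqrt n\,[f]_{\sqrt n\epsilon},
\]
and taking the supremum over all admissible $\mathcal F_\epsilon$ gives $\I_\epsilon(f,Q)\le C\,[f]_{\sqrt n\epsilon}$ with $C=2m\sqrt n=C(n)$. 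The main obstacle is the coloring step: the enlargement of rotated cubes into axis-parallel ones necessarily destroys disjointness, and bounded overlap alone does not give a bounded number of disjoint subfamilies without the grid argument above (or an appeal to a standard coloring lemma); moreover one must keep careful track of the dimensional mismatch between $\epsilon^{1-n}$ and $(\sqrt n\epsilon)^{1-n}$ in the cardinality constraints, which is precisely compensated by the $\epsilon^{n-1}$ versus $(\sqrt n\epsilon)^{n-1}$ normalizations at the end. Everything else — the bounding-box construction and the $L^1$ averaging inequalities — is routine.
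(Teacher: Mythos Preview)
Your proof is correct and follows essentially the same strategy as the paper's: enlarge each rotated $\epsilon$-cube to an axis-parallel $\sqrt{n}\,\epsilon$-cube inside $Q$, compare oscillations, and split the resulting family into boundedly many disjoint subfamilies satisfying the cardinality constraint. The only differences are cosmetic---the paper derives a bounded-degree intersection graph and invokes a greedy ``exhaustion'' coloring, whereas you give an explicit grid/parity coloring, and your oscillation comparison yields the slightly sharper factor $2n^{n/2}$ in place of the paper's $2n^n$; your additive count $m\le \kappa\,2^n+\lceil n^{(n-1)/2}\rceil$ (via $\sum_j\lceil M_j\delta^{n-1}\rceil\le N\delta^{n-1}+\kappa 2^n$) is also a small sharpening of the paper's multiplicative splitting.
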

In the proof of this result, we shall to use the elementary inequalities
\begin{equation}\label{eq:characteristic}
\mean{Q}\Bigl|f(x)-\mean{Q} f\Bigr|\,dx\leq\mean{Q}\mean{Q}|f(x)-f(y)|\,dx\,dy\leq
2 \mean{Q}\Bigl|f(x)-\mean{Q} f\bigr|\,dx.
\end{equation}

\begin{proof} The first inequality in \eqref{eq:may16} is obvious. 
In order to prove the second one, let $\mathcal F_\epsilon=\{Q_i\}_{i\in I}$ be a disjoint family of $\epsilon$-cubes in $Q$
with cardinality of $I$ less than $\epsilon^{1-n}$. For each cube $Q_i$ in $\mathcal F_\epsilon$ we can find a $\sqrt{n}\epsilon$-cube $Q_i'$
containing $Q_i$, contained in $Q$, and with sides parallel to the coordinate axes.  Since $\mathcal F_\epsilon$ is disjoint, the family of the corresponding cubes $Q_i'$ has bounded
overlap, more precisely for each cube $Q_i'$ the cardinality of the set $\{j:\ Q_j'\cap Q_i'\neq\emptyset\}$ does not exceed $c_n$. 
Hence, by an exhaustion procedure, we can partition the index set $I$ in families $I_1,\ldots,I_N$, with $N\leq c_n$, in such a way that
the families
$$
\mathcal G_j':=\left\{Q_i':\ i\in I_j\right\}\qquad j=1,\ldots,N
$$
are disjoint. Since $\#\,\mathcal G_j'\leq \epsilon^{1-n}$ for each $j=1,\ldots,N$, splitting the family $\mathcal G_j'$
in at most $n^{(n-1)/2}+1$ subfamilies with cardinality less than $(\sqrt{n}\epsilon)^{1-n}$, we have
$$
(\sqrt{n}\epsilon)^{1-n}\sum_{Q'\in\mathcal G_j'}\mean{Q'}\Bigl|f(x)-\mean{Q'}f\Bigr|\,dx\leq
\bigl(n^{(n-1)/2}+1\bigr)[f]_{\sqrt{n}\epsilon}.
$$
On the other hand, if $\mathcal G_j$, $j=1,\ldots,N$, denote the corresponding families of original cubes, 
since
$$
\mean{Q}\mean{Q}|f(x)-f(y)|\,dx\,dy \leq (\sqrt{n})^{2n}
\mean{Q'}\mean{Q'}|f(x)-f(y)|\,dx\,dy,
$$
using
\eqref{eq:characteristic} we readily obtain
$$
\sum_{Q\in\mathcal G_j}\mean{Q}\Bigl|f(x)-\mean{Q}f\Bigr|\,dx\leq 2n^n
\sum_{Q'\in\mathcal G_j'}\mean{Q'}\Bigl|f(x)-\mean{Q'}f\bigr|\,dx.
$$
Hence, since $\mathcal F_\epsilon=\mathcal G_1\cup\cdots\cup\mathcal G_N$, 
adding with respect to $j$ and using the fact that $\mathcal F_\epsilon$ is arbitrary, we obtain the second inequality in \eqref{eq:may16} with 
$$C:=2c_n(n^{(n-1)/2}+1)n^{(3n-1)/2}.$$
\end{proof}

\begin{remark}
{\rm
Notice that Corollary~\ref{cor:bbm} could be refined by adapting the argument used in 
Section~\ref{sec:lbound} to the setting of \cite{bbm} where the cubes are forced to be parallel to the coordinate axes: more precisely, if we denote by $K_n$ the largest possible intersection of a hyperplane with
the unit cube,
i.e.,
$$
K_n:=\sup_{H\subset \R^n\, \text{hyperplane}}\Haus{n-1}(H\cap Q),
$$
then
$$
[\uno_A] \geq \frac12 \min \Bigl\{1, \frac{1}{K_n}\,\Per(A,Q)\Bigr\}.
$$
}
\end{remark}

\subsection{A new characterization of the perimeter}
\label{sect:no constraint}

Theorem \ref{thm:mainintro} provides a characterization of sets of finite perimeter only when $\lim_{\epsilon \to 0}\I_\epsilon(\uno_A)<1/2$.
 This critical 
 threshold could be easily tuned by modifying the upper bound on the cardinality
of the families $\mathcal F_\epsilon$ in \eqref{defIeps}, as \eqref{eq:june3} shows. 
As a consequence a byproduct of our results is a characterization
of the perimeter free of truncations:
\begin{equation}
\label{eq:per}
\lim_{\epsilon\downarrow 0}
\sup_{{\mathcal H}_\epsilon}\epsilon^{n-1}\sum_{Q'\in\mathcal H_\epsilon}
2\mean{Q'}\Bigl|\uno_A(x)-\mean{Q'} \uno_A\Bigr|\, dx=\Per(A),
\end{equation}
where now ${\mathcal H}_\epsilon$ 
denotes a collection of disjoint $\epsilon$-cubes $Q'\subset \R^n$ 
with arbitrary orientation but no constraint on cardinality.

In order to prove \eqref{eq:per} we notice that the argument in Section \ref{sec:upperbound}, based on the
relative isoperimetric inequality, easily gives
$$
\epsilon^{n-1}\sum_{Q'\in\mathcal H_\epsilon}
2\mean{Q'}\Bigl|\uno_A(x)-\mean{Q'} \uno_A\Bigr|\, dx\leq \Per(A).
$$
On the other hand, we can use \eqref{eq:june3} to get
$$
\liminf_{\epsilon\downarrow 0}
\sup_{{\mathcal H}_\epsilon}\epsilon^{n-1}\sum_{Q'\in\mathcal H_\epsilon}
2\mean{Q'}\Bigl|\uno_A(x)-\mean{Q'} \uno_A\Bigr|\, dx \geq \sup_{M>0} \min\{M,\Per(A)\} =\Per(A),
$$
proving \eqref{eq:per}.

Notice that the formulation given in Theorem~\ref{thm:mainintro} is stronger than \eqref{eq:per}, because it shows that a cardinality
constrained maximization is sufficient to provide finiteness of perimeter, under the critical threshold.

It is also worth noticing that \eqref{eq:per} can be extended to general $\Z$-valued functions:
indeed, the argument in Section \ref{sec:lbound} can be easily adapted to prove that
if $f \in BV(\R^n;\Z)$ then
$$
\liminf_{r \to 0}\frac{\J_-(f,B_r(x))}{|Df|(B_r(x))} \geq \frac12
\qquad \text{for $|Df|$-a.e. $x$,}
$$
where $\J_-(f;B_r(x))$ is defined 
analogously to $\J_-(E;B_r(x))$ in Section \ref{sec:lbound}, thus showing that
$$
\liminf_{\epsilon\downarrow 0}
\sup_{{\mathcal H}_\epsilon}\epsilon^{n-1}\sum_{Q'\in\mathcal H_\epsilon}
2\mean{Q'}\Bigl|f(x)-\mean{Q'} f\Bigr|\, dx\geq |Df|(\R^n),
$$
while the converse inequality follows by writing
$$
f=\sum_{k>0} \uno_{\{f \geq k\}} -\sum_{k>0}\uno_{\{f \leq - k\}},
$$
which gives
\begin{align*}
&\sup_{{\mathcal H}_\epsilon}\epsilon^{n-1}\sum_{Q'\in\mathcal H_\epsilon}
2\mean{Q'}\Bigl|f(x)-\mean{Q'}f\Bigr|\,dx\\
&\leq\sum_{k>0}
\sup_{{\mathcal H}_\epsilon}\epsilon^{n-1}\sum_{Q'\in\mathcal H_\epsilon}
2\mean{Q'}\Bigl|\uno_{\{f \geq k\}}(x)-\mean{Q'}\uno_{\{f \geq k\}}\Bigr|\,dx\\
&\qquad+\sum_{k>0}
\sup_{{\mathcal H}_\epsilon}\epsilon^{n-1}\sum_{Q'\in\mathcal H_\epsilon}
2\mean{Q'}\Bigl|\uno_{\{f \leq -k\}}(x)-\mean{Q'}\uno_{\{f \leq -k\}}\Bigr|\,dx\\
&\leq \sum_{k>0}\Per ({\{f \geq k\}})+\sum_{k>0}\Per ({\{f \leq -k\}})
=|Df|(\R^n).
\end{align*}

These facts provide a new  characterization both of sets of finite perimeter and of the perimeter of sets, independent of the theory of
distributions. Heuristically, given $\epsilon>0$, any maximizing family ${\mathcal F}_\epsilon$ provides a sort of boundary on scale
$\epsilon$ of $A$, and some proofs (in particular the one of Lemma~\ref{lem1}, see also Remark~\ref{rem:simple_idea}) 
make more rigorous this idea.

It is interesting also to
compare this result with another non-distributional characterization of sets of finite perimeter due to H. Federer, see \cite[Theorem~4.5.11]{Fed}: $\Per(A)$
is finite if and only if the essential boundary $\partial^* A$, namely the set in \eqref{eq:defbf} of points of density neither 0 nor 1,
has finite $\Haus{n-1}$-measure, and then $\Per(A)=\Haus{n-1}(\partial^*A)$. However, Federer's characterization and the one provided
by this paper seem to be quite different.

\subsection{Approximation of the total variation}
\label{sect:no constraint2}

Motivated by the results in Section~\ref{sect:no constraint}, for $f\in L^1_{\rm loc}(\R^n)$ we may define
$$
{\sf K}_\epsilon(f):=\sup_{{\mathcal H}_\epsilon}\epsilon^{n-1}\sum_{Q'\in\mathcal H_\epsilon}
2\mean{Q'}\Bigl| f(x)-\mean{Q'} f\Bigr|\, dx
$$
where, once more, ${\mathcal H}_\epsilon$
denotes a collection of disjoint $\epsilon$-cubes $Q'\subset\R^n$ 
with arbitrary orientation but no constraint on cardinality. Then, the result of the previous section can be read
as follows:
$$
\lim_{\epsilon\to 0}{\sf K}_\epsilon(f)=|Df|(\R^n)
$$
for any $\Z$-valued function $f$. 

For general $BV_{\rm loc}$ functions $f$, the asymptotic analysis of ${\sf K}_\epsilon$ seems to be more difficult
to grasp. By considering smooth functions and functions with a jump discontinuity along a hyperplane, one is led to the conjecture that
\begin{equation}\label{eq:june3bis}
\lim_{\epsilon\to 0}{\sf K}_\epsilon(f)=\frac{1}{4}|D^a f|(\R^n)+\frac{1}{2}|D^s f|(\R^n)
\end{equation}
for all $f\in SBV_{\rm loc}(\R^n)$, where (see \cite{afp}) $SBV_{\rm loc}(\Omega)$ is the vector space of all
$f\in BV_{\rm loc}(\Omega)$ whose distributional derivative is the sum of a measure
$D^a f$ absolutely continuous w.r.t. $\Leb{n}$ and a measure $D^s f$ concentrated on a set $\sigma$-finite w.r.t. $\Haus{n-1}$.
For functions $f\in BV_{\rm loc}\setminus SBV_{\rm loc}$, having the so-called Cantor part of the derivative, it might possibly happen that
${\sf K}_\epsilon(f)$ oscillates as $\epsilon\to 0$ between $\frac 14 |Df|(\R^n)$ and $\frac 12 |Df|(\R^n)$.

Notice that all functionals ${\sf K}_\epsilon$ are $L^1_{\rm loc}(\R^n)$-lower semicontinuous.
On the other hand, it is natural to expect that the $\Gamma$-limit of ${\sf K}_\epsilon$ w.r.t. the $L^1_{\rm loc}(\R^n)$ topology exists and
that 
$$
\Gamma-\lim_{\epsilon\to 0}{\sf K}_\epsilon(f)=\frac{1}{4}|Df|(\R^n).
$$

\section{Appendix: proof of \eqref{eq:Hadwiger}} 

In this section we prove the relative isoperimetric inequality in the cube, in the sharp form provided by \eqref{eq:Hadwiger},
in any Euclidean space $\R^n$, $n\geq 1$. To this aim,
we introduce the Gaussian isoperimetric function $I:(0,1)\to (0,1/\sqrt{2\pi}]$ defined by
$$
I(t):=\varphi\circ\Phi^{-1}(t)\quad\text{with}\quad \varphi(x)=\frac{e^{-x^2/2}}{\sqrt{2\pi}},\,\,\,\,\Phi(x):=\int_{-\infty}^x \varphi(y)\, dy,\,\,\,\,x\in\R.
$$
We extend $I$ by continuity to $[0,1]$ setting $I(0)=I(1)=0$. Notice that $I(1/2)=\varphi(0)=1/\sqrt{2\pi}$ and it
is also easy to check that $I(t)=I(1-t)$.

\begin{lemma} \label{calculus} The function $K(t):=\sqrt{2\pi} I(t)-4t(1-t)$ is nonnegative in $[0,1]$ and $K(t)=0$ if and
only if $t\in\{0,1/2,1\}$.
\end{lemma}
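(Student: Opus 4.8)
The plan is to exploit the second-order ODE satisfied by the Gaussian isoperimetric function. Writing $u(t):=\Phi^{-1}(t)$ and differentiating $\Phi(u(t))=t$ gives $u'(t)=1/\varphi(u(t))$, hence from $I(t)=\varphi(u(t))$ and $\varphi'(x)=-x\varphi(x)$ one gets $I'(t)=\varphi'(u(t))/\varphi(u(t))=-u(t)=-\Phi^{-1}(t)$ and, differentiating once more, $I''(t)=-u'(t)=-1/I(t)$ on $(0,1)$. Thus $I$ is smooth and strictly concave on $(0,1)$. The symmetry $I(t)=I(1-t)$ and $4t(1-t)=4(1-t)t$ give $K(t)=K(1-t)$, so it is enough to show $K>0$ on $(0,1/2)$ and $K(0)=K(1/2)=0$; the latter is immediate, since $I(0)=I(1)=0$ and $I(1/2)=1/\sqrt{2\pi}$ yield $K(0)=K(1)=0$ and $K(1/2)=1-1=0$.

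Next I would track the sign of $K''$. With $J:=\sqrt{2\pi}\,I$ we have $K=J-4t(1-t)$ and $K''=J''+8=-2\pi/J+8$, so $K''(t)>0$ exactly when $J(t)>\pi/4$. Since $J'(t)=-\sqrt{2\pi}\,\Phi^{-1}(t)>0$ on $(0,1/2)$, the function $J$ is continuous and strictly increasing on $[0,1/2]$ from $J(0)=0$ to $J(1/2)=1$; as $\pi/4\in(0,1)$ there is a unique $t_*\in(0,1/2)$ with $J(t_*)=\pi/4$, and $K''<0$ on $(0,t_*)$, $K''>0$ on $(t_*,1/2)$. I also record the two boundary facts $K'(0^+)=+\infty$ (because $K'(t)=\sqrt{2\pi}\,I'(t)-4+8t$ and $I'(t)=-\Phi^{-1}(t)\to+\infty$ as $t\downarrow 0$) and $K'(1/2)=\sqrt{2\pi}\,I'(1/2)-4+8\cdot\tfrac12=0$, using $\Phi^{-1}(1/2)=0$.

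The conclusion then follows by a short sign-chase. On $(t_*,1/2)$ the derivative $K'$ is strictly increasing and $K'(1/2)=0$, so $K'<0$ there and in particular $K'(t_*)<0$; on $(0,t_*)$ the derivative $K'$ is strictly decreasing from $+\infty$ down to $K'(t_*)<0$, so it has a unique zero $t_0\in(0,t_*)$, with $K'>0$ on $(0,t_0)$ and $K'<0$ on $(t_0,t_*)$. Hence $K$ is strictly increasing on $[0,t_0]$ and strictly decreasing on $[t_0,1/2]$; combined with $K(0)=K(1/2)=0$ this forces $K>0$ on $(0,1/2)$. By the symmetry $K(t)=K(1-t)$ we also get $K>0$ on $(1/2,1)$ and $K(1)=0$, which is the claim.

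The one genuinely delicate point is that $K$ is neither convex nor concave on $[0,1]$: it is concave near the endpoints, where $1/I$ blows up, and convex near $t=1/2$. So no single-shot convexity comparison works, and one is forced to go through the cascade from the sign of $K''$ to the sign of $K'$ to the sign of $K$; the blow-up $K'(0^+)=+\infty$ is the input that pins down the sign of $K'$ near $0$, after which the endpoint value $K'(1/2)=0$ propagates the information back. All the remaining steps are routine calculus with the explicit formulas for $\varphi$ and $\Phi$.
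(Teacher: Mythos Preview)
Your proof is correct and follows essentially the same approach as the paper: both derive $I''=-1/I$, locate the unique sign change of $K''$ on $(0,1/2)$ (your $t_*$ is the paper's $t_0$, where $I=\sqrt{2\pi}/8$, equivalently $J=\pi/4$), and then propagate from $K'(1/2)=0$ back through the sign of $K'$ to the sign of $K$. The only cosmetic difference is the final step on the concave piece: the paper appeals directly to concavity on $(0,t_*)$ together with $K(0)=0$ and $K(t_*)>0$, whereas you use the blow-up $K'(0^+)=+\infty$ to pin down the sign of $K'$ there; both arguments are equally valid.
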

\begin{proof} Let us record an additional property of $I$:
\begin{equation}\label{eq:smooth}
I\in C^\infty(0,1)\quad\text{and}\quad I''=-\frac{1}{I}\,\,\text{on $(0,1)$.}
\end{equation}
Indeed, from $I\circ\Phi=\varphi$ and $\Phi'=\phi$ we obtain $(I'\circ\Phi(x))\varphi(x)=\varphi'(x)=-x\varphi(x)$, so that
$I'\circ\Phi(x)=-x$. By differentiating once more we get $I''(t)=-1/\Phi'\circ\Phi^{-1}(t)=-1/I(t)$ in $(0,1)$,
as desired.

Since $I$ attains its maximum at $1/2$, from \eqref{eq:smooth} we obtain that $I'>0$ in $(0,1/2)$, hence
there exists a unique $t_0\in (0,1/2)$ such that $I(t_0)=\sqrt{2\pi}/8$. 

Since $K$ vanishes on $\{0,1/2,1\}$ and it inherits from $I$
the symmetry property $K(t)=K(1-t)$, it suffices to check that $K$ is strictly positive in $(0,1/2)$.
Differentiating $K$ we get
\begin{equation}
K'(t)=\sqrt{2\pi}I'(t)-4+8t,
\end{equation}
In particular, $K'(1/2)=I'(1/2)=0$. Differentiating once more 
and using \eqref{eq:smooth} we get
\begin{equation}
K''(t)=\sqrt{2\pi}I''(t)+8=-\frac{\sqrt{2\pi}}{I(t)}+8,
\end{equation}
so that 
\begin{equation}\label{eq:conca}
K''<0\quad\text{in $(0,t_0)$},\qquad
K''>0\quad\text{in $(t_0,1/2)$.}
\end{equation}
In particular $K'(1/2)=0$ gives $K'<0$ in $[t_0,1/2)$ and therefore $K(1/2)=0$ gives 
\begin{equation}\label{eq:final1}
K>0\quad\text{in $[t_0,1/2)$.}
\end{equation}
For the interval $[0,t_0]$ we use $K(t_0)>0$, $K(0)=0$ and the concavity of $K$ in $(0,t_0)$, ensured by \eqref{eq:conca},
to get
\begin{equation}\label{eq:final2}
K>0\quad\text{in $(0,t_0]$.}
\end{equation}

The conclusion follows by \eqref{eq:final1} and \eqref{eq:final2}.
\end{proof}

Now, let us prove \eqref{eq:Hadwiger}. Combining \cite[Proposition 5 and Theorem 7]{bm} we obtain the
inequality
\begin{equation}\label{eq:13may}
I\biggl(\int_{(0,1)^n} f\,dx\biggr)\leq \int_{(0,1)^n}\bigl[I(f)+\frac{1}{\sqrt{2\pi}}|\nabla f|\bigr]\, dx
\end{equation}
for any locally Lipschitz function $f:(0,1)^n\to [0,1]$. 
Then, the $BV$ version of the Meyers-Serrin approximation theorem
(due to Anzellotti-Giaquinta, see for instance \cite[Theorem~3.9]{afp}) enables
us to approximate  in $L^1((0,1)^n)$ any function $f\in BV((0,1)^n)$ by
functions $f_k\in C^\infty((0,1)^n)$ in such a way that 
$$
\lim_{k\to\infty}\int_{(0,1)^n}|\nabla f_k|\,dx=|Df|\bigl((0,1)^n\bigr).
$$
In addition, if $f:(0,1)^n\to [0,1]$, a simple truncation argument provides approximating
functions $f_k$ with the same property. It then follows from \eqref{eq:13may} that
$$
I\biggl(\int_{(0,1)^n} f\,dx\biggr)\leq \int_{(0,1)^n}I(f)\,dx+\frac{1}{\sqrt{2\pi}}|Df|\bigl((0,1)^n\bigr)
$$
for all $f:(0,1)^n\to [0,1]$ with bounded variation. Since $I(0)=I(1)=0$, choosing $f=\uno_E$ gives
$$
I(|E|)\leq \frac{1}{\sqrt{2\pi}}\Per\bigl(E,(0,1)^n\bigr).
$$
We conclude using Lemma~\ref{calculus}.
\quad

\smallskip
\noindent
{\bf Acknowledgements.} The first author (LA) was partially supported by the ERC ADG project GeMeThNES, the second
author (JB) was partially supported by NSF grant DMS-1301619, the third author (HB) 
was partially supported by NSF grant DMS-1207793
and by grant number 238702 of the European Commission (ITN, project FIRST), the fourth author (AF) was partially supported by
NSF grant DMS-1262411.

The authors thank B. Kawohl and F. Barthe for very useful information about the relative isoperimetric inequality \eqref{eq:Hadwiger}.

\end{document}